\numberwithin{equation}{section}
\theoremstyle{plain} 
\newtheorem{proposition}{Proposition}[section]  
\newtheorem{corollary}[proposition]{Corollary} 
\newtheorem{theorem}[proposition]{Theorem} 
\theoremstyle{definition} 
\newtheorem{definition}[proposition]{Definition}
\newtheorem{remark}[proposition]{Remark} 
\newtheorem{example}[proposition]{Example} 
\newtheorem{notation}[proposition]{Notation}
\newcommand\Tor{\operatorname{Tor}}
\newcommand\Hom{\operatorname{Hom}}
\newcommand\RHom{\operatorname{R Hom}}
\newcommand\Rad{\operatorname{Rad}}
\newcommand{\xx}{\underline x}
\newcommand{\ff}{\underline f}
\newcommand\Spec{\operatorname{Spec}}
\newcommand{\qism}{\stackrel{\sim}{\longrightarrow}}
\author[Peter Schenzel]{Peter Schenzel}
\title[proregular sequences]{{About proregular sequences and an application to prisms}}
\address{Martin-Luther-Universit\"at Halle-Wittenberg,
Institut f\"ur Informatik, D --- 06 099 Halle (Saale), Germany}
\email{schenzel@informatik.uni-halle.de}
\subjclass[2020]
{Primary: 13C12; Secondary: 13C11, 13D07}
\keywords{torsion, injective module,  non-Noetherian commutative ring, proregular sequences}
\begin{document}
\begin{abstract}
	Let $\xx = x_1,\ldots,x_k$ denote an ordered sequence of elements of a commutative ring $R$. Let $M$ 
	be an $R$-module. We recall the two notions that $\xx$ is $M$-proregular given by Greenlees and May 
	(see \cite{GM}) and Lipman (see \cite{lip}) and show that both notions are equivalent. As a main result 
	we prove a cohomological characterization for $\xx$ to be $M$-proregular in terms of 
	\v{C}ech homology. This implies also that $\xx$ is $M$-weakly proregular if it is $M$-proregular. 
	A local-global principle for proregularity and weakly proregularity is proved. This is used for a result about prisms as introduced by Bhatt and Scholze (see \cite{BSc}).
\end{abstract}

\maketitle

\section*{Introduction}
Let $R$ denote a commutative ring and let $M$ be an $R$-modue. Let $\xx = x_1,\ldots,x_k$ denote an ordered sequence  of elements of $R$. In their paper Greenlees and May (see \cite{GM}) defined $\xx$ to be $M$-proregular if 
for all $i = 1,\ldots,k$ and an integer $n \geq 1$ there is an integer $m \geq n$ such that the multiplication map 
\[
(x_1,\ldots,x_{i-1})^mM :_M x_i^m/(x_1,\ldots,x_{i-1})^mM  \stackrel{x_i^{m-n}}{\longrightarrow} 
(x_1,\ldots,x_{i-1})^nM :_M x_i^n/(x_1,\ldots,x_{i-1})^nM 
\]
is zero.  In their paper (see \cite{lip}) Lipman et all called $\xx$ to be $M$-proregular if for all $i = 1,\ldots,k$ and 
an integer $n \geq 1$ there is an integer $m \geq n$ such that the multiplication map 
\[
(x_1^m,\ldots,x_{i-1}^m)M :_M x_i^m/(x_1^m,\ldots,x_{i-1}^m)M  \stackrel{x_i^{m-n}}{\longrightarrow} 
(x_1^n,\ldots,x_{i-1}^n)M :_M x_i^n/(x_1^n,\ldots,x_{i-1}^n)M  
\]
is zero (see also the Definition \ref{def-1}). The advantage of the second notion of proregularity  is its relation to Koszul complexes. 
Moreover, both definitions 
are equivalent  (see \ref{prop-1}).  The notion of proregular sequences is needed 
by Greenlees and May for their study of the left derived functors of the completion. This was continued by Lipman et all (see \cite{lip} and \cite{lip2}). Moreover, it turned out that the notion that $\xx$
 is weakly proregular (see \ref{def-2}) is more appropriate  for the study of \v{C}ech homology and cohomology. A first systematic study of weakly proregular sequences is done in \cite{S}.  See also the monograph \cite{SpSa} for a more systematic investigation  and its relation to  completions. As a first main result we prove a 
 homological characterization of $M$-proregular sequences.
 
 \begin{theorem} \label{thm-0}
 	Let $\xx = x_1,\ldots,x_k$ denote an ordered sequence of a commutative ring $R$. For an $R$-module $M$ the following conditions are equivalent.
 	\begin{itemize}
 		\item[(i)]  The sequence $\xx$ is $M$-proregular. 
 		\item[(ii)] $\check{H}^1_{x_i}(\Gamma_{\xx_{i-1}}(\Hom_R(M,I) ) = 0$ for $i = 1,\ldots,k$ and any injective $R$-module $I$. 
 	\end{itemize}
 \end{theorem}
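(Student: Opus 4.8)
The plan is to rewrite condition (ii) as the vanishing, for every injective $I$, of a concrete filtered colimit of $\Hom$-modules, and then to recognise that vanishing --- via an injective-hull argument --- as exactly the pro-nullity condition built into $M$-proregularity. Fix $i\in\{1,\ldots,k\}$ and put $\mathfrak b=(x_1,\ldots,x_{i-1})$; since both (i) and (ii) are conjunctions over $i$, it suffices to prove for each such $i$ that the $i$-th instance of (ii) is equivalent to the $i$-th instance of the definition of $M$-proregular.

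First I would compute the left-hand side of (ii). From $\Gamma_{\mathfrak b}(N)=\varinjlim_n\Hom_R(R/\mathfrak b^n,N)$ and the Hom--tensor adjunction one gets $\Gamma_{\xx_{i-1}}(\Hom_R(M,I))=\varinjlim_n\Hom_R(M/\mathfrak b^nM,I)$. For the single element $x_i$ the two-term complex $[\,N\to N_{x_i}\,]$, which is the \v{C}ech complex on $x_i$, computes $\mathrm{R}\Gamma_{(x_i)}(N)$; hence $\check H^1_{x_i}(N)=\Coker(N\to N_{x_i})=H^1_{(x_i)}(N)=\varinjlim_s\Ext^1_R(R/x_i^s,N)$, and this functor commutes with filtered colimits in $N$. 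Since $I$ is injective, $\Ext^1_R(R/x_i^s,\Hom_R(N',I))\cong\Hom_R(\Tor_1^R(R/x_i^s,N'),I)\cong\Hom_R((0:_{N'}x_i^s),I)$ for every $N'$. Taking $N'=M/\mathfrak b^nM$ and using $(0:_{M/\mathfrak b^nM}x_i^s)=\mathfrak b^nM:_Mx_i^s/\mathfrak b^nM$, one arrives at
\[
\check H^1_{x_i}\bigl(\Gamma_{\xx_{i-1}}(\Hom_R(M,I))\bigr)\;\cong\;\varinjlim_{(n,s)\in\mathbb N^2}\Hom_R\bigl(\mathfrak b^nM:_Mx_i^s/\mathfrak b^nM,\;I\bigr),
\]
where the $n$-transition is induced by the surjections $M/\mathfrak b^{n'}M\twoheadrightarrow M/\mathfrak b^{n}M$ ($n'\ge n$) and the $s$-transition by multiplication by $x_i^{s'-s}$.

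The next ingredient is the formal fact that, for an inverse system $\{A_\alpha\}$ of $R$-modules over a directed index set, $\varinjlim_\alpha\Hom_R(A_\alpha,I)=0$ for every injective $R$-module $I$ if and only if $\{A_\alpha\}$ is pro-zero (i.e. for each $\alpha$ some transition map $A_\beta\to A_\alpha$ vanishes). The implication ``pro-zero $\Rightarrow$ vanishing'' is immediate; for the converse, given $\alpha$ one feeds the inclusion $A_\alpha\hookrightarrow E_R(A_\alpha)$ of $A_\alpha$ into its injective hull into $\varinjlim_\beta\Hom_R(A_\beta,E_R(A_\alpha))=0$, which forces some $A_\beta\to A_\alpha$ to be zero because $A_\alpha\hookrightarrow E_R(A_\alpha)$ is a monomorphism. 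Applied to the system above, the $i$-th instance of (ii) (for all injective $I$) is equivalent to pro-nullity of $\bigl\{\mathfrak b^nM:_Mx_i^s/\mathfrak b^nM\bigr\}_{(n,s)}$.

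Finally, the diagonal $n\mapsto(n,n)$ is cofinal in $\mathbb N^2$, so this pro-object is represented by the diagonal system, whose transition map from level $m$ to level $n$ (for $m\ge n$) factors as
\[
\mathfrak b^mM:_Mx_i^m/\mathfrak b^mM\;\longrightarrow\;\mathfrak b^nM:_Mx_i^m/\mathfrak b^nM\;\xrightarrow{\;x_i^{m-n}\;}\;\mathfrak b^nM:_Mx_i^n/\mathfrak b^nM,
\]
the first arrow being reduction modulo $\mathfrak b^n$. A direct check shows the composite sends $u+\mathfrak b^mM$ to $x_i^{m-n}u+\mathfrak b^nM$; that is, it is precisely the multiplication map $\mathfrak b^mM:_Mx_i^m/\mathfrak b^mM\xrightarrow{x_i^{m-n}}\mathfrak b^nM:_Mx_i^n/\mathfrak b^nM$ in the Greenlees--May form of the definition of $M$-proregular recalled in the introduction. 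Hence pro-nullity says exactly that for every $n\ge1$ there is $m\ge n$ making this map zero, which is the $i$-th instance of (i). Running the same computation with the ideals $(x_1^n,\ldots,x_{i-1}^n)$ --- cofinal in $\mathfrak b^n$, hence defining the same torsion functor --- reproduces Lipman's form instead, consistent with Proposition \ref{prop-1}. The genuinely non-routine points are the pro-zero/injective-hull criterion and, once the diagonal reduction is set up, the bookkeeping identifying the diagonal transition map with the map of Definition \ref{def-1}; everything else is standard \v{C}ech--Koszul--Tor formalism together with cofinality of ideal powers.
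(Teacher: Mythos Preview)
Your overall strategy coincides with the paper's: express $\check H^1_{x_i}(\Gamma_{\xx_{i-1}}(\Hom_R(M,I)))$ as a filtered colimit of modules $\Hom_R(A_\alpha,I)$ where the $A_\alpha$ are exactly the quotients appearing in the proregularity condition, and then invoke the injective-hull trick to conclude that vanishing for every injective $I$ is equivalent to pro-nullity of $\{A_\alpha\}$. The paper carries this out via Koszul (co)homology, identifying the proregularity transition map with $H_1(y^m;H_0(\xx^{(m)};M))\to H_1(y^n;H_0(\xx^{(n)};M))$, dualizing by $\Hom_R(-,I)$, and passing to the limit.

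There is, however, a genuine gap in your intermediate identifications. The assertion that $[N\to N_{x_i}]$ computes $\mathrm{R}\Gamma_{(x_i)}(N)$, and hence that $\check H^1_{x_i}(N)=H^1_{(x_i)}(N)=\varinjlim_s\Ext^1_R(R/x_i^sR,N)$, is false in general: it holds precisely when the single element $x_i$ is weakly proregular, i.e.\ when $R$ has bounded $x_i$-torsion, and this is not assumed. Similarly, $\Tor_1^R(R/x_i^sR,N')\cong 0:_{N'}x_i^s$ requires $x_i$ to be a non-zerodivisor on $R$ (take $x_i=0$ for a counterexample). Your two incorrect identifications happen to compose to the right answer, but the argument as written does not justify the displayed formula.

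The fix is to replace Ext and Tor by Koszul cohomology and homology, as the paper does. One has unconditionally $\check H^1_{x_i}(N)=\varinjlim_s H^1(x_i^s;N)=\varinjlim_s N/x_i^sN$, and since $I$ is injective the Koszul duality $H^1(x_i^s;\Hom_R(N',I))\cong\Hom_R(H_1(x_i^s;N'),I)=\Hom_R(0:_{N'}x_i^s,I)$ holds for every $N'$. With this correction your colimit formula is valid and the remainder of your argument (the pro-zero criterion via injective hulls and the diagonal cofinality) goes through; this is exactly the route taken in the paper.
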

 
 For the proof see \ref{thm-1}. Here $\check{H}^1_{x_i}(\cdot)$ denotes the \v{C}ech homology with respect to $x_i$ (see \ref{not-1}); and 
 $\Gamma_{\xx_{i-1}}(\cdot)$ is the torsion functor with respect to the ideal generated by 
 $\xx_{i-1} = x_1,\ldots,x_{i-1}$.  As an application there is a behavior similar to that of a regular sequence.  For a 
 sequence $\xx = x_1,\ldots,x_k$ and any $k$-tupel $(n_1,\ldots,n_k) \in (\mathbb{N}_+)^k$ we write 
 $\xx^{(\underline{n}) } = x_1^{n_1}, \ldots,x_k^{n_k}$ . 
 
\begin{corollary} \label{cor-0}
	With the notation of \ref{thm-0} the following conditions are equivalent.
		\begin{itemize}
		\item[(i))] The sequence $\xx$ is $M$-proregular.
		\item[(ii)] There is an $\underline{n} \in (\mathbb{N}_+)^k$ such that the sequence 
		$\xx^{(\underline{n})}$ is $M$-proregular.
		\item[(iii)] The sequence $\xx^{(\underline{n})}$ is $M$-proregular for all $\underline{n} \in (\mathbb{N}_+)^k$.
	\end{itemize}
\end{corollary}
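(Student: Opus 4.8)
The plan is to derive all three equivalences from the cohomological criterion of Theorem~\ref{thm-0}, using that neither ingredient of condition (ii) there is affected when an element is replaced by one of its powers. Fix a tuple $\underline n = (n_1,\ldots,n_k) \in (\mathbb{N}_+)^k$. Since the $(i-1)$-st prefix of $\xx^{(\underline n)} = x_1^{n_1},\ldots,x_k^{n_k}$ is $x_1^{n_1},\ldots,x_{i-1}^{n_{i-1}}$ and its $i$-th term is $x_i^{n_i}$, applying Theorem~\ref{thm-0} to the sequence $\xx^{(\underline n)}$ shows that $\xx^{(\underline n)}$ is $M$-proregular if and only if
\[
\check{H}^1_{x_i^{n_i}}\bigl(\Gamma_{(x_1^{n_1},\ldots,x_{i-1}^{n_{i-1}})}(\Hom_R(M,I))\bigr) = 0
\]
for $i = 1,\ldots,k$ and every injective $R$-module $I$.

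Next I would record two elementary invariances. First, for any $R$-module $N$ and any $t \geq 1$ one has $\check{H}^1_{x}(N) = \check{H}^1_{x^t}(N)$: the \v{C}ech complex with respect to a single element is (quasi-isomorphic to) the two-term complex $N \to N_x$, equivalently the colimit $\varinjlim_s (N \xrightarrow{x^s} N)$, and restricting to the subsystem indexed by the multiples of $t$ is cofinal and yields $N_{x^t} = N_x$ with the same structure map; hence the complexes, and so their (co)homology, agree. Second, $\Gamma_{(x_1,\ldots,x_{i-1})} = \Gamma_{(x_1^{n_1},\ldots,x_{i-1}^{n_{i-1}})}$ as functors, since the two ideals have the same radical and the torsion functor depends only on the radical of the defining ideal. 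Combining the two, the displayed vanishing for $\xx^{(\underline n)}$ is literally the same family of conditions as $\check{H}^1_{x_i}(\Gamma_{\xx_{i-1}}(\Hom_R(M,I))) = 0$ for $i = 1,\ldots,k$ and all injective $I$, which by Theorem~\ref{thm-0} characterizes $M$-proregularity of $\xx$.

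Therefore, for \emph{every} fixed $\underline n$, the statement ``$\xx$ is $M$-proregular'' is equivalent to ``$\xx^{(\underline n)}$ is $M$-proregular''. This at once gives (i) $\Leftrightarrow$ (iii), and it gives (i) $\Leftrightarrow$ (ii) by applying the equivalence to the particular tuple supplied by (ii); (iii) $\Rightarrow$ (ii) is trivial since $(\mathbb{N}_+)^k \neq \varnothing$ (e.g. take $\underline n = (1,\ldots,1)$, which recovers $\xx$ itself). I do not expect a genuine obstacle: the only point requiring a careful, if brief, argument is the stability $\check{H}^1_x = \check{H}^1_{x^t}$, where one must check that the relevant \v{C}ech/telescope complex from \ref{not-1} is presented so that the transition maps are multiplication by powers of $x$, making the subsystem over multiples of $t$ cofinal; the rest is bookkeeping with the notation of Theorem~\ref{thm-0}.
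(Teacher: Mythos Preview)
Your proof is correct and follows exactly the approach the paper intends: the paper's own proof of this corollary is the single sentence ``The statements are easy consequences of Theorem~\ref{thm-1} by condition (iii),'' and you have simply spelled out the two invariances (of $\check{H}^1_{x}$ under $x \mapsto x^t$ and of $\Gamma_{\mathfrak a}$ under passing to an ideal with the same radical) that make this immediate. No substantive difference.
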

 
If the sequence $\xx$ is $M$-regular it is also $M$-proregular. For a Noetherian $R$-module $M$ 
any sequence is $M$-proregular. Therefore, the notion of proregularity is important in the non-Noetherian 
situation. An application of this is the following result inspired by the work 
of Bhatt and Scholze (see \cite{BSc}).  For the notation we refer to Chapter 4.

\begin{corollary} \label{cor}
	Let $(R, \mathcal{I})$ denote a prism. Suppose that $\mathcal{I}$ is of bounded $p$-torsion.  Then it follows.
	\begin{itemize}
		\item[(a)]  $(\mathcal{I}, p)$ is proregular, i.e. for an integer $n$ there is an $m \geq n$ 
		such that $\mathcal{I}^m :_R p^m /\mathcal{I}^m \stackrel{p^{m-n}}{\longrightarrow} 
		\mathcal{I}^n :_R p^n /\mathcal{I}^n$ is the zero map. 
		\item[(b)] $\Gamma_{\mathcal{I}} (I)/\Gamma_{(\mathcal{I},p)}(I)$ is $p$-divisible for any 
		injective $R$-module $I$, i.e. $\Gamma_{\mathcal{I}} (I) = p \Gamma_{\mathcal{I}} (I) + 
		\Gamma_{(\mathcal{I},p)}(I)$.
	\end{itemize}
Moreover, the conditions (a) and (b) are equivalent. 
\end{corollary}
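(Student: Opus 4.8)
The plan is to show, using the local--global principle for proregularity, that it is enough to treat the case where $\mathcal{I}$ is principal, and to see that then both (a) and (b) are equivalent to the vanishing
\[
\check{H}^1_p\bigl(\Gamma_{\mathcal{I}}(I)\bigr) = 0 \quad \text{for every injective } R\text{-module } I ,
\]
which one finally verifies from the structure of a bounded prism. For the reduction I would use that $\mathcal{I}$, being an invertible ideal, is finitely generated and, on a finite affine cover of $\Spec R$, principal, generated by a non-zero-divisor (a distinguished element). The zero-map condition in (a) is a statement about a map of $R$-modules, hence may be checked locally, and the local--global principle --- whose role is exactly to bypass the bad behaviour of injective modules under localization --- reduces the proregularity assertions to the case $\mathcal{I} = dR$ with $d$ a non-zero-divisor. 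In that case $\mathcal{I}^m = (d^m)$, the clause $i=1$ of Definition \ref{def-1} is vacuous since $0 :_R d^m = 0$, and (a) says precisely that the ordered sequence $d, p$ is $R$-proregular.

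I would then translate (a) and (b) into the displayed vanishing. Applying Theorem \ref{thm-0} with $M = R$, so that $\Hom_R(M, I) = I$, the sequence $d, p$ is $R$-proregular if and only if $\check{H}^1_d(I) = 0$ and $\check{H}^1_p\bigl(\Gamma_{dR}(I)\bigr) = 0$ for all injective $R$-modules $I$; the first vanishing is automatic, because the one-term sequence $d$ is $R$-proregular (its only proregularity clause being vacuous as above), whence Theorem \ref{thm-0} gives $\check{H}^1_d(I) = 0$. Thus (a) is equivalent to the displayed vanishing. For (b) the prism structure is not needed: for any $R$-module $N$ the module $N/\Gamma_p(N)$ is $p$-torsion free and $(N/\Gamma_p(N))_p = N_p$, so it is $p$-divisible precisely when $N/\Gamma_p(N) \to N_p$ is surjective, equivalently when $N \to N_p$ is surjective, equivalently when $\check{H}^1_p(N) = 0$; applying this with $N = \Gamma_{\mathcal{I}}(I)$ and using $\Gamma_{(\mathcal{I},p)}(I) = \Gamma_p\bigl(\Gamma_{\mathcal{I}}(I)\bigr)$ (since $\Gamma_{J+K} = \Gamma_J \circ \Gamma_K$ for ideals $J, K$) shows that (b) is equivalent to the same vanishing. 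Hence (a) $\Leftrightarrow$ (b).

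It remains to prove that (a) holds, and this is where the hypothesis on $\mathcal{I}$ is used. In the principal case the clause $i=2$ of Definition \ref{def-1} asks, for each $n$, for an $m \geq n$ with $p^{m-n} \cdot \bigl((d^m) :_R p^m\bigr) \subseteq (d^n)$. Since $(d^m) \subseteq (d^n)$ for $m \geq n$, any $x$ with $p^m x \in (d^m)$ represents a $p^\infty$-torsion element of $R/(d^n)$, so it is enough to know that the $p^\infty$-torsion of $R/\mathcal{I}^n$ is killed by a fixed power $p^{c_n}$ of $p$; then any $m$ with $m - n \geq c_n$ works. For $n = 1$ this is the assumption that $\mathcal{I}$ has bounded $p$-torsion, i.e.\ that the prism is bounded and $R/\mathcal{I}$ has bounded $p^\infty$-torsion (cf.\ \cite{BSc}); for general $n$ one filters $R/\mathcal{I}^n$ by the submodules $\mathcal{I}^j/\mathcal{I}^{j+1}$, each isomorphic to $R/\mathcal{I}$ because $\mathcal{I}$ is invertible, and uses that an extension of a module whose $p^\infty$-torsion is killed by $p^a$ by one whose $p^\infty$-torsion is killed by $p^b$ has its $p^\infty$-torsion killed by $p^{a+b}$, so that $c_n \leq cn$ with $c$ a bound for $R/\mathcal{I}$. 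This gives (a), and then (b) follows from the equivalence. The step I expect to require the most care is the reduction of the first paragraph: the \v{C}ech-cohomology side of Theorem \ref{thm-0} does not localize transparently, since injective modules need not localize to injectives, so the local--global passage must go through the intrinsic ideal-quotient form of proregularity rather than through injective resolutions; granting this, and the quoted fact that $R/\mathcal{I}$ has bounded $p^\infty$-torsion for a bounded prism, the rest is formal.
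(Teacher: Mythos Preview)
Your approach is essentially the paper's own (the corollary is deduced from Theorem~\ref{prop-7}, whose proof follows exactly your outline): localize along a covering sequence to make $\mathcal{I}$ principal, use that $x_i$ is a non-zerodivisor together with the bounded $p$-torsion of $R/\mathcal{I}$ (and the filtration by $\mathcal{I}^j/\mathcal{I}^{j+1}$, which is the content of Proposition~\ref{prop-2}) to get local proregularity, and then glue via the injective horizontal maps in the covering diagram. Your argument that (b) is equivalent to $\check{H}^1_p(\Gamma_{\mathcal{I}}(I))=0$ is also the paper's.

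One point deserves sharpening. For the equivalence (a)~$\Leftrightarrow$~(b) you reduce to the principal case and invoke Theorem~\ref{thm-0}; but Theorem~\ref{thm-0} applied over $R_{f_i}$ yields a vanishing condition for injective $R_{f_i}$-modules, while (b) is a condition on injective $R$-modules, and you rightly note that injectives need not localize well. The paper avoids this by arguing globally, \emph{without} reducing to principal: the map in (a) is identified with $H_1(p^m;R/\mathcal{I}^m)\to H_1(p^n;R/\mathcal{I}^n)$, and the proof of Theorem~\ref{thm-1} goes through verbatim with the filtration $\{R/\mathcal{I}^n\}$ in place of $\{R/\xx^{(n)}R\}$, since all that proof uses is $\varinjlim \Hom_R(R/\mathcal{I}^n,I)=\Gamma_{\mathcal{I}}(I)$. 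This closes your acknowledged gap: the local--global principle is needed only to establish that (a) holds, not for the equivalence (a)~$\Leftrightarrow$~(b).
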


In Section 1 we summarize the notation and prove some basic results. Section 2 is devoted the homological approach of proregular sequences and their consequences. A few more results about $M$-weakly proregular 
sequences are shown in Section 3. This provides a relative version of some results of \cite{SpSa}.  In Section 4 we investigate the local-global behavior of 
proregular and weakly proregular sequences and prove the Corollary \ref{cor}. For results of Commutative 
Algebra we follow Matsumura's book \cite{mats}. For homological preliminaries and definitions about 
\v{C}ech homology and cohomology we refer to the monograph \cite{SpSa}.

\section{Definitions and Preliminaries}
In the following let $R$ denote a commutative ring. Let $\xx = x_1,\ldots, x_k$ denote an ordered 
sequence of elements of $R$. For a positive integer $n$ we put $\xx^{(n) } = x_1^n,\ldots,x_k^n$. Let $M$ denote 
an $R$-module.  Note that 
\[
 \xx^{nk}M \subseteq \xx^{(n)} M \subseteq \xx^{n}M \; \mbox{  for all  } \; n \geq 1
\] 
and therefore $\Rad \xx R = \Rad \xx^{(n)}R$ for all $n \geq 1$.  The  $R$-module $M$ is of bounded 
$x$-torsion for an element $x \in R$ if the increasing sequence $\{0:_M x^n\}_{n \geq 1}$ 
stabilizes, i.e. there is an integer $c$ such that $0:_M x^n = 0:_M x^c$ for all $n \geq c$. Note that 
$M$ is of bounded $x$-torsion for an $M$-regular element $x \in R$. If $N$ is a submodule of $M$ and 
$M$ is of bounded $x$-torsion, then $N$ is of bounded $x$-torsion too. Moreover, if $M$ is a 
Noetherian module, then $M$ is of bounded torsion for any element. Let $m \geq n$ denote two 
positive integers. The multiplication map by $x^{m-n}$ on $M$ induces a map 
\[
0:_M x^m \stackrel{x^{m-n}}{\longrightarrow} 0:_M x^n \; \mbox{ for all } \; m \geq n.
\]
Therefore $M$ is of bounded $x$-torsion if and only if the previous inverse system is pro-zero. This 
leads Greenlees and May to the definition of an $M$-proregular sequence (see \cite[Definition 1.8]{GM}). 
We follow here the notion of proregularity as given in \cite{lip} and show that it is equivalent to the notion 
of Greenlees and May (see \cite{GM}). 

\begin{definition} \label{def-1}
	Let $M$ denote an $R$-module and let $\xx = x_1,\ldots, x_k$ denote a sequence of elements 
	of $R$. Then $\xx$ is called M-proregular if for $i = 1,\ldots,k$ and any positive integer $n$ 
	there is an integer $m \geq n$ 
	such that the multiplication map 
	\[
	(x_1^m,\ldots,x_{i-1}^m)M :_M x_i^m/(x_1^m,\ldots,x_{i-1}^m)M  \stackrel{x_i^{m-n}}{\longrightarrow} 
	(x_1^n,\ldots,x_{i-1}^n)M :_M x_i^n/(x_1^n,\ldots,x_{i-1}^n)M  \eqno (1)
	\]
	is zero. This is equivalent to saying that for $i = 1,\ldots,k$ and any positive integer $n$ there is an integer 
	$m \geq n$ such that 
	\[
		(x_1^m,\ldots,x_{i-1}^m)M :_M x_i^m \subseteq 	(x_1^n,\ldots,x_{i-1}^n)M  :_M x_i^{m-n}. \eqno (2)
	\]
	Note that an element $x\in R$ is $M$-proregular if and only if $M$ is of bounded $x$-torsion.
\end{definition}

A first result about the behavior of $M$-proregular sequences is the following.

\begin{proposition} \label{prop-1}
	Let $\xx = x_1,\ldots,x_k$ denote an ordered sequence of elements of $R$. Let $M$ denote 
	an $R$-module. Then the following conditions are equivalent.
	\begin{itemize}
		\item[(i)] The sequence $\xx$ is $M$-proregular.
		\item[(ii)] For $i = 1,\ldots,k$ and any positive integer $n$ there is an integer $m \geq n$ such that 
		\[
		(x_1,\ldots,x_{i-1})^mM :_M x_i^m \subseteq 	(x_1,\ldots,x_{i-1})^nM:_M x_i^{m-n}. 
		\]
		\item[(iii)] For $i = 1,\ldots,k$ and any  integer $n$ there is an integer $m \geq n$ such that 
		the multiplication map 
		\[
			(x_1,\ldots,x_{i-1})^mM :_M x_i^m/(x_1,\ldots,x_{i-1})^mM  \stackrel{x_i^{m-n}}{\longrightarrow} 
		(x_1,\ldots,x_{i-1})^nM :_M x_i^n/(x_1,\ldots,x_{i-1})^nM 
		\]
		is zero.
	\end{itemize}
\end{proposition}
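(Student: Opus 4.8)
The plan is to reduce each of (i), (ii), (iii) to an inclusion of colon submodules, and then to pass between ordinary powers of the ideal $(x_1,\dots,x_{i-1})$ and the ``diagonal'' ideals $(x_1^n,\dots,x_{i-1}^n)$ by means of the containments recorded at the beginning of Section 1.

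First I would observe the purely formal equivalence of (ii) and (iii). For submodules $A\subseteq B$ of $M$ and integers $m\ge n\ge 1$ the map $A:_M x^m/A \stackrel{x^{m-n}}{\longrightarrow} B:_M x^n/B$ is well defined (if $x^m y\in A$ then $x^n(x^{m-n}y)=x^m y\in A\subseteq B$) and vanishes precisely when $A:_M x^m\subseteq B:_M x^{m-n}$; this is exactly the observation that turns (1) into (2) in Definition \ref{def-1}. Taking $A=(x_1,\dots,x_{i-1})^mM$ and $B=(x_1,\dots,x_{i-1})^nM$ gives (ii)$\Leftrightarrow$(iii), and Definition \ref{def-1} itself tells us that (i) is equivalent to its condition (2): for each $i$ and each $n\ge1$ there is $m\ge n$ with $(x_1^m,\dots,x_{i-1}^m)M:_M x_i^m\subseteq(x_1^n,\dots,x_{i-1}^n)M:_M x_i^{m-n}$. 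So the only thing left is to prove that this condition, ranging over all $i$ and $n$, is equivalent to (ii).

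Next I would fix $i$ and abbreviate $x=x_i$, $s=i-1$, $J=(x_1,\dots,x_{i-1})$ and $J^{[n]}=(x_1^n,\dots,x_{i-1}^n)$. The case $i=1$ is trivial, since then $J=(0)$ and condition (2) and (ii) literally coincide, so I may assume $s\ge1$; then Section 1 supplies
\[
J^{sn}M\ \subseteq\ J^{[n]}M\ \subseteq\ J^nM\qquad(n\ge1),
\]
the right inclusion because $x_j^n\in J^n$, the left one by pigeonhole: every degree-$sn$ monomial in $x_1,\dots,x_s$ has some exponent $\ge n$. For ``condition (2) $\Rightarrow$ (ii)'': given $n$, take $q\ge n$ as in condition (2) and put $m=sq$ (so $m\ge q\ge n$); then $y\in J^mM:_M x^m$ means $x^m y\in J^{sq}M\subseteq J^{[q]}M$, hence $x^{m-q}y\in J^{[q]}M:_M x^q\subseteq J^{[n]}M:_M x^{q-n}$, hence $x^{m-n}y\in J^{[n]}M\subseteq J^nM$, i.e. $y\in J^nM:_M x^{m-n}$. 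For ``(ii) $\Rightarrow$ condition (2)'': given $n$, apply (ii) with $sn$ in place of $n$ to obtain $m\ge sn\ (\ge n)$ with $J^mM:_M x^m\subseteq J^{sn}M:_M x^{m-sn}$; then $y\in J^{[m]}M:_M x^m$ gives $x^m y\in J^{[m]}M\subseteq J^mM$, so $x^{m-sn}y\in J^{sn}M\subseteq J^{[n]}M$, so $x^{m-n}y=x^{sn-n}(x^{m-sn}y)\in J^{[n]}M$, i.e. $y\in J^{[n]}M:_M x^{m-n}$.

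The whole proof is a bookkeeping of exponents: the points needing care are keeping the power of $x_i$ correctly aligned along each chain of colon inclusions and checking that the $m$ produced always satisfies $m\ge n$ (here $m=sq\ge q\ge n$, respectively $m\ge sn\ge n$). The one substantive ingredient is the inclusion $J^{(i-1)n}\subseteq(x_1^n,\dots,x_{i-1}^n)$, which is precisely what lets the two flavours of ``power'' be traded for each other; beyond this I anticipate no obstacle.
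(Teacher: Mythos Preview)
Your proof is correct and follows essentially the same route as the paper: both arguments reduce to trading between $J^{[n]}$ and $J^n$ via the containments $J^{sn}\subseteq J^{[n]}\subseteq J^n$, and then chase the resulting chain of colon inclusions in each direction. The only differences are cosmetic---you work elementwise and with the tight constant $s=i-1$, while the paper writes the inclusions directly and uses the slightly larger constant $l=i$---but the underlying computation is identical.
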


\begin{proof}
	We fix $l$ and put $\xx = x_1,\ldots,x_{l-1}$ and $y = x_l$. 
	The equivalence of (ii) and (iii) is trivially true. Then we prove the implication (i) $\Longrightarrow$ (iii). 
	By the assumption and the definition \ref{def-1} for a given $n$ there is an $m \geq n$ such that 
	$\xx^{(m)} M :_M y ^m \subseteq \xx^{(n)} M :_M y^{m-n}  \subseteq \xx^nM:_M y^{m-n}$. Because of 
	$\xx^{ml}M :_M y^{ml} \subseteq \xx^{(m)}M :_M y^{ml} $ this implies that 
	\[
	\xx^{ml}M :_M y^{ml} \subseteq \xx^{(m)}M :_M y^{ml}  \subseteq \xx^{(n)}M :_M y^{ml-n}  
	\subseteq \xx^n M:_M y^{ml-n}
	\]
	as required. In order to prove (ii) $\Longrightarrow$ (i) fix $n$ and choose an integer $m \geq nl$ such 
	that $\xx^mM:_M y^m \subseteq  \xx^{nl}:_M y^{m-nl}$. Then we get the inclusions 
	\[
	\xx^{(m)}M:_M y^m \subseteq  \xx^{m} M:_M y^m \subseteq \xx^{nl} M:_M y^{m-nl} 
	\subseteq \xx^{(n)}M:_M y^{m-nl}  
	\subseteq \xx^{(n)}M:_M y^{m-n}
	\]
	which finishes the proof.
\end{proof}

The following example by J. Lipman (see \cite{lip2}) shows that a proregular sequence 
is not permutable. 

\begin{example} \label{ex-1}
	Define $R = \prod_{n \geq 1} \mathbb{Z}/2^n\mathbb{Z}$ . Let $x = (2+2^n)_{n \geq 1}$ and 
	$1 = (1+2^n)_{n\geq 1}$ two elements of $R$. Then the sequence $\{1,x\}$ is $R$-regular, 
	in particular $R$-proregular. Moreover, $R$ is not of bounded $x$-torsion. Therefore 
	$\{x,1\}$ is not a proregular sequence. 
\end{example}

Clearly an $M$-regular sequence $\xx$ is also $M$-proregular. An interesting extension is 
the following result.

\begin{proposition} \label{prop-2}
	Let $M$ be an $R$-module. Let $\xx = x_1,\ldots,x_k$ denote an $M$-regular sequence.
	 Suppose that $M/\xx M$ is of bounded $y$-torsion for an element $y \in R$. Then 
	 $\xx,y = x_1,\ldots,x_k,y$ is an $M$-proregular sequence. 
\end{proposition}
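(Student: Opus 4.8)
The plan is to verify, for the extended sequence $\xx, y = x_1, \ldots, x_k, y$, the ordinary-power criterion (iii) of Proposition \ref{prop-1}; by that proposition this is equivalent to $\xx, y$ being $M$-proregular. The sequence $\xx$ itself is $M$-regular, hence $M$-proregular (as noted before the statement), hence satisfies criterion (iii); its steps $i = 1, \ldots, k$ are precisely the first $k$ steps of criterion (iii) for $\xx, y$, so those hold automatically and the entire content is concentrated in the step $i = k+1$ (with $x_{k+1} = y$). Throughout, working with ordinary powers of the single ideal $\ag := \xx R$ --- rather than with the ideals generated by $x_1^m, \ldots, x_k^m$ --- is what makes the argument below run, since the submodules $\ag^n M$ form a descending chain.

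For the step $i = k+1$, I would proceed as follows. By hypothesis $M/\ag M$ is of bounded $y$-torsion, so fix an integer $c \geq 1$ with $\ag M :_M y^j = \ag M :_M y^c$ for all $j \geq c$. Given $n \geq 1$, put $m := n(c+1)$. The step $i = k+1$ of criterion (iii) amounts to the inclusion $\ag^m M :_M y^m \subseteq \ag^n M :_M y^{m-n}$, and since $m - n = nc$ it is enough to show that $y^m z \in \ag^m M$ implies $y^{nc} z \in \ag^n M$. Besides the bounded-torsion hypothesis, the one extra ingredient is the standard fact (see \cite{mats}) that, because $x_1, \ldots, x_k$ is $M$-regular, the associated graded module $\operatorname{gr}_\ag(M) = \bigoplus_{p \geq 0} \ag^p M/\ag^{p+1}M$ is free over $M/\ag M$ on the degree-$p$ monomials in $x_1, \ldots, x_k$; in the form I need: whenever $\sum_{|\beta| = p} x^\beta c_\beta \in \ag^{p+1}M$ with the $c_\beta \in M$, one has $c_\beta \in \ag M$ for every $\beta$.

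Given $z$ with $y^m z \in \ag^m M$, I would show by induction on $p = 1, \ldots, n$ that $y^{pc} z \in \ag^p M$, carrying along an explicit degree-$p$ expansion of $y^{pc} z$ in $x_1, \ldots, x_k$. The base case $p = 1$ is $y^m z \in \ag M$ (as $m \geq 1$) together with $z \in \ag M :_M y^m = \ag M :_M y^c$. For the step $p \to p+1$ with $p + 1 \leq n$: multiply the expansion $y^{pc} z = \sum_{|\beta| = p} x^\beta c_\beta$ by $y^{m - pc}$; since $m \geq n \geq p+1$ the product $y^m z$ lies in $\ag^m M \subseteq \ag^{p+1}M$, so the graded-freeness fact forces $y^{m-pc} c_\beta \in \ag M$ for each $\beta$; since $m - pc \geq c$ this gives $c_\beta \in \ag M :_M y^{m-pc} = \ag M :_M y^c$, i.e. $y^c c_\beta \in \ag M$; expanding each $y^c c_\beta$ in $x_1, \ldots, x_k$ and regrouping produces a degree-$(p+1)$ expansion of $y^{(p+1)c} z = y^c \cdot y^{pc} z$, in particular $y^{(p+1)c} z \in \ag^{p+1}M$. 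Taking $p = n$ gives $y^{nc} z \in \ag^n M$, which is the required implication; Proposition \ref{prop-1} then finishes the proof.

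The main obstacle I anticipate is purely arithmetical: one must choose the single integer $m$ so that simultaneously $m \geq p+1$ (to get $\ag^m M \subseteq \ag^{p+1}M$) and $m - pc \geq c$ (to invoke the torsion bound at level $p$) for every $p$ with $1 \leq p \leq n-1$, and the choice $m = n(c+1)$ works because $m - pc = c(n-p) + n \geq c$. The only other point needing care is that the graded-freeness input is used in the relative (module) setting rather than for a regular sequence on the ring itself; beyond that, the multi-index bookkeeping and the two appeals to Proposition \ref{prop-1} are routine.
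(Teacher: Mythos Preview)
Your argument is correct. Both your proof and the paper's verify the ordinary-power criterion of Proposition~\ref{prop-1}, dispatch the first $k$ steps by $M$-regularity of $\xx$, and for the step $i=k+1$ invoke the freeness of $\operatorname{gr}_{\ag}(M)$ over $M/\ag M$. The execution of that last step, however, is genuinely different. The paper proves by induction on $n$ the qualitative statement that each $M/\ag^n M$ has bounded $y$-torsion: from the short exact sequence
\[
0 \to \ag^n M/\ag^{n+1}M \to M/\ag^{n+1}M \to M/\ag^n M \to 0
\]
together with $\ag^n M/\ag^{n+1}M \cong (M/\ag M)^{\binom{k+n-1}{n}}$, bounded $y$-torsion passes to the middle term since it is closed under finite direct sums, submodules, and extensions. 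Once $M/\ag^n M$ has bounded $y$-torsion the required inclusion is immediate. Your approach instead fixes the single bound $c$ for $M/\ag M$ and runs an explicit element-level induction on $p$, using graded freeness at each stage to peel off one factor of $\ag$; this yields the concrete choice $m=n(c+1)$. The paper's route is shorter and more conceptual; yours is more elementary and makes the dependence of $m$ on $n$ and $c$ explicit, which is occasionally useful.
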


\begin{proof}
	Now  $x_i$ is regular on $M/(x_1,\ldots,x_{i-1})^nM$ for all $i =1,\ldots,k$ and all $n \geq 1$ 
	since $\xx = x_1,\ldots,x_k$ is an $M$-regular sequence (see e.g. \cite[Theorem 16.2]{mats}). 
	That is, condition (ii) in \ref{prop-1} is satisfied for all $i = 1,\ldots, k$. In order to finish the 
	proof we have to show that $M/\xx^nM$ is of bounded $y$-torsion for all $n \geq 1$. Namely, 
	if $\xx^nM:_M y^c = \xx^n M:_M y^d$ for all $d \geq c$ choose $m = n +c$ and therefore 
	$\xx^m M:_M y^m \subseteq \xx^nM:_M y^m = \xx^nM:_M y^{m-n}$. It remains to show the 
	previous claim. Since $\xx$ is an $M$-regular sequence $\xx^nM/\xx^{n+1} M\cong \oplus_{b_n}M/\xx M$ 
	with $b_n = \binom{k+n-1}{n} $ (see e.g. \cite{mats}). Then the short exact sequence 
	\[
	0 \to \xx^nM/\xx^{n+1} M \to M/\xx^{n+1} M \to  M/\xx^nM \to 0
	\]
	implies by applying $\Gamma_y$ the exact sequence $0 \to \Gamma_y( \xx^nM/\xx^{n+1} M) \to 
	\Gamma_y(M/\xx^{n+1} M ) \to \Gamma_y(M/\xx^nM)$. This 
	proves -- by induction on $n$ -- that $M/\xx^nM$ is of bounded $y$-torsion for all $n \geq 1$. Note that a submodule 
	of $\Gamma_y(M/\xx^nM)$ is of bounded $y$-torsion too.
\end{proof}

For our further investigations we need a few more notation and definitions. 

\begin{notation} \label{not-1}
	(A) Let $\mathfrak{a} \subset R$ denote an ideal of $R$. For an $R$-module $X$ let 
	$\Gamma_{\mathfrak{a}}(X)  = \{x \in X| \mathfrak{a}^n x = 0 \mbox{  for some } n \geq 1 \}$ the 
	$\mathfrak{a}$-torsion submodule of $X$. Its left derived functors $H^i_{\mathfrak{a}}(X), i \geq 0,$ are the 
	local cohomology modules of $X$ with respect to $\mathfrak{a}$ (see e.g. \cite{BS} or \cite{SpSa}
	 for more details). \\
	 (B) Let $\xx = x_1,\ldots,x_k$ denote a sequence of elements of $R$. Let $\check{C}_{\xx}$ denote the 
	 \v{C}ech complex with respect to $R$. For an $R$-module $X$ we write $\check{C}_{\xx} (X) = 
	 \check{C}_{\xx} \otimes_R X$. We denote the cohomology of $\check{C}_{\xx} (X) $ by 
	 $\check{H}^i_{\xx}(X), i \geq 0$. \\
	 (C) Let $\xx = x_1,\ldots, x_k$ a sequence of elements and $\mathfrak{a}= \xx R$. Then it follows easily that 
	  $\Gamma_{\mathfrak{a}}(X) = \check{H}^0_{\xx}(X)$. The more general isomorphisms 
	 $H^i_{\mathfrak{a}}(X)  \cong \check{H}^i_{\xx}(X)$ for all $i \geq 0$ do not hold in general. 
	 They hold if and only if $\xx$ is a weakly proregular sequence (see \cite{SpSa} for more details 
	 about weakly proregular sequences).  \\
	 (D) For a sequence  of elements $\xx = x_1,\ldots,x_k$ and an $R$-module $M$ we use the 
	 Koszul complexes $K_{\bullet}(\xx;M)$ and $K^{\bullet}(\xx;M)$. We refer to \cite[5.2]{SpSa} for all the details we need. The Koszul homology and Koszul cohomology are denoted by $H_i(\xx;M)$ and $H^i(\xx;M)$ resp. for 
	 $i \in \mathbb{Z}$. 
\end{notation}

We continue with the a further definition. 

\begin{definition} \label{def-2} (see \cite[Section 7.3]{SpSa})
	Let $\xx = x_1,\ldots,x_k$ denote a sequence of elements of $R$. Let $M$ be an $R$-module. The sequence 
	$\xx$ is called $M$-weakly proregular if for all $i > 0$ and any positive integer $n$ there is an integer 
	$m \geq n$ such that the natural homomorphism 
	\[
	H_i(\xx^{(m)}; M)  \to 	H_i(\xx^{(n)}; M)
	\]
	is zero. A sequence $\xx$ is called weakly proregular if $\xx$ is $R$-weakly proregular. 
\end{definition}

The notion of weakly proregular sequences plays an essential r\^ole in respect to local cohomology 
and the left derived functors of completion (see \cite{lip}, \cite{lip2} and \cite{SpSa}). A characterization of 
$M$-weakly proregular sequence is the following. 

\begin{proposition} \label{prop-3}  {\rm (see  \cite[Proposition 5.3]{Sp})} 
    Let $\xx$ denote a sequence of elements of $R$. Let $M$ be an $R$-module. Then the following conditions are 
	equivalent.
	\begin{itemize}
		\item[(i)] $\xx$ is $M$-weakly proregular.
	\item[(ii)] The inverse  system $\{H_i(x^{(n)};M \otimes_R F)\}_{n \geq 1}$ is pro-zero for all $i > 0$ 
	and any flat $R$-module $F$.
	\item[(iii)] $\varinjlim H^i(\xx^{(n)};\Hom_R(M,I)) = 0$ for all $i > 0$ 
	and any injective $R$-module $I$.
	\item[(iv)] $\check{H}^i_{\xx}( \Hom_R(M,I)) = 0$ for all 
	$i > 0$ and any injective $R$-module $I$. 
	\end{itemize}
\end{proposition}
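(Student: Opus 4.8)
The plan is to prove the three equivalences (i)~$\Leftrightarrow$~(ii), (iii)~$\Leftrightarrow$~(iv), and (i)~$\Leftrightarrow$~(iii), working degreewise in $i$, by combining flat base change for Koszul homology, the self-duality of the Koszul complex together with exactness of $\Hom_R(-,I)$ for injective $I$, and the standard presentation of the \v{C}ech complex $\check{C}_{\xx}$ as a direct limit of Koszul cochain complexes (see \cite{SpSa}).

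For (i)~$\Leftrightarrow$~(ii): since $K_\bullet(\xx^{(n)};N) = K_\bullet(\xx^{(n)};R)\otimes_R N$ for any $R$-module $N$, one has $K_\bullet(\xx^{(n)};M)\otimes_R F = K_\bullet(\xx^{(n)};M\otimes_R F)$, and flatness of $F$ yields a natural isomorphism $H_i(\xx^{(n)};M\otimes_R F)\cong H_i(\xx^{(n)};M)\otimes_R F$ compatible with the transition maps $K_\bullet(\xx^{(m)};M)\to K_\bullet(\xx^{(n)};M)$ for $m\geq n$. Hence if a transition map $H_i(\xx^{(m)};M)\to H_i(\xx^{(n)};M)$ vanishes, so does the corresponding one for $M\otimes_R F$; this gives (ii), and (ii) with $F=R$ gives (i). The equivalence (iii)~$\Leftrightarrow$~(iv) is then immediate from the identification $\check{H}^i_{\xx}(N)\cong\varinjlim_n H^i(\xx^{(n)};N)$ applied to $N=\Hom_R(M,I)$.

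For (i)~$\Leftrightarrow$~(iii): because $K_\bullet(\xx^{(n)};R)$ is a bounded complex of finite free modules, one has $\Hom_R(K_\bullet(\xx^{(n)};M),I)=\Hom_R(K_\bullet(\xx^{(n)};R),\Hom_R(M,I))\cong K^\bullet(\xx^{(n)};\Hom_R(M,I))$ as cochain complexes, compatibly with the transition maps; applying $\Hom_R(-,I)$ turns the inverse system of Koszul chain complexes into precisely the direct system whose colimit is $\check{C}_{\xx}(\Hom_R(M,I))$. Since $I$ is injective, taking cohomology gives $H^i(\xx^{(n)};\Hom_R(M,I))\cong\Hom_R(H_i(\xx^{(n)};M),I)$, hence $\varinjlim_n H^i(\xx^{(n)};\Hom_R(M,I))\cong\varinjlim_n\Hom_R(H_i(\xx^{(n)};M),I)$. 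It then suffices to check the elementary fact that an inverse system $\{A_n\}_{n\geq1}$ of $R$-modules is pro-zero if and only if $\varinjlim_n\Hom_R(A_n,I)=0$ for every injective $R$-module $I$: the "only if" direction is clear because pro-zeroness kills every element in the colimit, and for the "if" direction one fixes $n$, embeds $A_n\hookrightarrow I$ into an injective, and uses that this monomorphism dies in the colimit to find $m\geq n$ with $A_m\to A_n$ zero. Taking $A_n=H_i(\xx^{(n)};M)$ and ranging over $i>0$ finishes the argument.

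The main obstacle is the bookkeeping in the Koszul duality $\Hom_R(K_\bullet(\xx^{(n)};M),I)\cong K^\bullet(\xx^{(n)};\Hom_R(M,I))$, especially checking that $\Hom_R(-,I)$ sends the inverse system of Koszul homologies to exactly the direct system computing $\check{H}^i_{\xx}(\Hom_R(M,I))$, together with the easily-botched equivalence between pro-zeroness of $\{A_n\}$ and the vanishing of $\varinjlim_n\Hom_R(A_n,I)$ for all injective $I$; the remaining steps are routine flatness and injectivity formalism and the standard Koszul–\v{C}ech identifications of \cite{SpSa}.
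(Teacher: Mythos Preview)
Your argument is correct and is essentially the standard one. Note that the paper does not give its own proof of this proposition but merely cites \cite[Proposition~5.3]{Sp}; the technique you use for (i)~$\Leftrightarrow$~(iii), namely dualizing via $\Hom_R(-,I)$ to pass between pro-zero inverse systems and vanishing direct limits and then recovering pro-zeroness by embedding a fixed $A_n$ into an injective, is exactly the device the paper itself employs in the proof of Theorem~\ref{thm-1} (and again in Theorem~\ref{prop-7}). So your approach matches both the cited reference and the ambient methodology of the paper.
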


We shall see that an $M$-proregular sequence is also $M$-weakly proregular. The converse is not true 
as follows by \ref{ex-1} since the sequence $1,x$ is $R$-regular and in particular $R$-proregular. 

\section{A homological approach}
At first we will give an interpretation of the definition of an $M$-proregular sequence $\xx = x_1,\ldots,x_k$ 
in terms of Kozul complexes. For the basics about Koszul complexes we refer to \cite{SpSa}. We fix $l \in 
\{1,\ldots,k\}$ and put $\xx = x_1,\ldots,x_{l-1}$ and $y = x_l$. Then the natural map 
\[
\xx^{(m)}M :_M y^m/\xx^{(m)}M \stackrel{y^{m-n}}{\longrightarrow} \xx^{(n)}M :_M y^n/\xx^{(n)}M \; 
\mbox{  for all } \; m \geq n \eqno (3)
\]
coincides with the natural map 
\[
H_1(y^m; H_0(\xx^{(m)};M)) \stackrel{y^{m-n}}{\longrightarrow}H_1(y^n; H_0(\xx^{(n)};M)) \; 
\mbox{  for all } \; m \geq n. \eqno (4)
\]
induced by the natural map of Koszul complexes $K_{\bullet}(\xx^{(m)},y^m;M) \to 
K_{\bullet}(\xx^{(n)},y^n;M)$ (see \cite[Section 5.2]{SpSa} for some details). In the following we specify $\xx_i= 
x_1,\ldots,x_i$ for $i= 1,\ldots, k-1$. 

\begin{theorem} \label{thm-1}
	Let $\xx = x_1,\ldots,x_k$ denote an ordered sequence of elements of $R$. Let $M$ denote an $R$-module. 
	Then the following conditions are equivalent.
	\begin{itemize}
		\item[(i)] The sequence $\xx$ is $M$-proregular. 
		\item[(ii)] The sequence $\xx$ is $(M\otimes_RF)$-proregular for any flat $R$-module $F$.
		\item[(iii)] $\check{H}^1_{x_i}(\Gamma_{\xx_{i-1}}(\Hom_R(M,I) ) = 0$ for $i = 1,\ldots,k$ and any injective $R$-module $I$. 
		\item[(iv)] $\Gamma_{\xx_{i-1}}(\Hom_R(M,I))	/\Gamma_{\xx_i}(\Hom_R(M,I))$ is $x_i$-divisible for $i = 1,\ldots,k$ 
		and any injective $R$-module $I$.
		\end{itemize}
\end{theorem}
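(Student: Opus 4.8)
The plan is to fix a single index and translate the pro-zero condition defining $M$-proregularity into the vanishing of a \v{C}ech cohomology module, exploiting that $\Hom_R(-,I)$ is exact for $I$ injective. First I would fix $i\in\{1,\ldots,k\}$ and put $y=x_i$; by \ref{prop-1} together with the identification of the map $(3)$ with the map $(4)$ made just before the theorem, the $i$-th clause in the definition of $M$-proregularity says precisely that the diagonal inverse system
\[
\Bigl\{\,H_1\bigl(y^n;M/(x_1^n,\ldots,x_{i-1}^n)M\bigr)\,\Bigr\}_{n\geq 1},
\]
whose transition maps are the composites induced by the surjections $M/(x_1^m,\ldots,x_{i-1}^m)M\twoheadrightarrow M/(x_1^n,\ldots,x_{i-1}^n)M$ followed by multiplication by $y^{m-n}$, is pro-zero. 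The equivalence $(\mathrm i)\Leftrightarrow(\mathrm{ii})$ is then a flat base change: for a flat $R$-module $F$ one has $(M\otimes_RF)/(x_1^n,\ldots,x_{i-1}^n)(M\otimes_RF)\cong(M/(x_1^n,\ldots,x_{i-1}^n)M)\otimes_RF$, and since $\otimes_RF$ is exact it commutes with $H_1(y^n;-)=\ker(y^n\cdot)$; thus the inverse system attached to $M\otimes_RF$ is the one for $M$ tensored with $F$, and a transition map that vanishes still vanishes after $\otimes_RF$. Taking $F=R$ gives the converse.

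For $(\mathrm i)\Leftrightarrow(\mathrm{iii})$ the key auxiliary fact is: a filtered inverse system $\{A_j\}$ of $R$-modules is pro-zero if and only if $\varinjlim_j\Hom_R(A_j,I)=0$ for every injective $R$-module $I$. (``Only if'': a zero transition map $A_{j'}\to A_j$ kills all of $\Hom_R(A_j,I)$ in the colimit. ``If'': embed $A_{j_0}$ into an injective $I$, observe that the class of this embedding dies in $\varinjlim_j\Hom_R(A_j,I)=0$, so that $A_{j'}\to A_{j_0}$ becomes zero after composing with an injection for some $j'\ge j_0$, whence $A_{j'}\to A_{j_0}$ is itself zero.) Now fix an injective $I$. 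By Hom--tensor adjunction and the cofinality of the two filtrations $\{(x_1^n,\ldots,x_{i-1}^n)M\}_n$ and $\{(x_1,\ldots,x_{i-1})^nM\}_n$ recorded at the start of Section 1, $\Gamma_{\xx_{i-1}}(\Hom_R(M,I))=\varinjlim_m\Hom_R\bigl(M/(x_1^m,\ldots,x_{i-1}^m)M,I\bigr)$; using that $\check{C}_y=\varinjlim_n K^{\bullet}(y^n;R)$, the identification $K^{\bullet}(y^n;\Hom_R(N,I))=\Hom_R(K_{\bullet}(y^n;N),I)$, and exactness of $\Hom_R(-,I)$ and of filtered colimits, one gets $\check{H}^1_y(\Hom_R(N,I))=\varinjlim_n\Hom_R(H_1(y^n;N),I)$ for every $N$; and $\check{H}^1_y$ commutes with filtered colimits, being tensoring with the flat complex $\check{C}_y$ followed by cohomology. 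Combining these,
\[
\check{H}^1_{x_i}\bigl(\Gamma_{\xx_{i-1}}(\Hom_R(M,I))\bigr)\;\cong\;\varinjlim_{(m,n)}\Hom_R\Bigl(H_1\bigl(x_i^n;M/(x_1^m,\ldots,x_{i-1}^m)M\bigr),\,I\Bigr),
\]
the colimit taken over $(m,n)\in\mathbb N\times\mathbb N$ with the product order. By the auxiliary fact this vanishes for all injective $I$ if and only if the $\mathbb N\times\mathbb N$-indexed inverse system $\{H_1(x_i^n;M/(x_1^m,\ldots,x_{i-1}^m)M)\}$ is pro-zero; and since an upper bound of $(m,n)$ is $(\max(m,n),\max(m,n))$ (and conversely each diagonal entry dominates the pair, up to reindexing), this is equivalent to its diagonal being pro-zero, i.e. to the $i$-th clause of $M$-proregularity. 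Letting $i$ range over $1,\ldots,k$ yields $(\mathrm i)\Leftrightarrow(\mathrm{iii})$.

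The equivalence $(\mathrm{iii})\Leftrightarrow(\mathrm{iv})$ is formal. Put $Y=\Gamma_{\xx_{i-1}}(\Hom_R(M,I))$; since $\xx_iR=\xx_{i-1}R+x_iR$ one has $\Gamma_{x_i}(Y)=\Gamma_{\xx_i}(\Hom_R(M,I))$. From $0\to\Gamma_{x_i}(Y)\to Y\to Y/\Gamma_{x_i}(Y)\to 0$ and $\check{H}^1_{x_i}(T)=0$ for any $x_i$-torsion module $T$ (as $T_{x_i}=0$), one gets $\check{H}^1_{x_i}(Y)\cong\check{H}^1_{x_i}(Y/\Gamma_{x_i}(Y))$; for the $x_i$-torsion-free module $Z=Y/\Gamma_{x_i}(Y)$ the localization map $Z\to Z_{x_i}$ is injective, so $\check{H}^1_{x_i}(Z)=Z_{x_i}/Z$, which vanishes precisely when $Z=x_iZ$, i.e. when $\Gamma_{\xx_{i-1}}(\Hom_R(M,I))/\Gamma_{\xx_i}(\Hom_R(M,I))$ is $x_i$-divisible. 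The step I expect to demand the most care is the bookkeeping inside $(\mathrm i)\Leftrightarrow(\mathrm{iii})$: keeping straight the two cofinality reductions (powers of $(x_1,\ldots,x_{i-1})$ versus the ``diagonal'' ideals $(x_1^n,\ldots,x_{i-1}^n)$, and the $\mathbb N\times\mathbb N$-diagonal), and checking that the transition maps of the iterated colimit really dualize the composite maps appearing in $(3)$ and $(4)$ and not some variant of them.
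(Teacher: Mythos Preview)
Your proposal is correct and follows essentially the same route as the paper: dualizing the pro-zero Koszul system via $\Hom_R(-,I)$ for $I$ injective, identifying the resulting direct limit with $\check{H}^1_{x_i}(\Gamma_{\xx_{i-1}}(\Hom_R(M,I)))$, and handling $(\mathrm{iii})\Leftrightarrow(\mathrm{iv})$ through the exact sequence $0\to\Gamma_y(X)\to X\to X_y\to\check{H}^1_y(X)\to 0$. The only difference is cosmetic: the paper works directly with the diagonal system $\{H^1(y^n;H^0(\xx^{(n)};\Hom_R(M,I)))\}_n$ and asserts the identification with $\check{H}^1_y(\Gamma_{\xx}(\Hom_R(M,I)))$ (citing \cite[6.1.11]{SpSa}), whereas you unfold this into an $\mathbb N\times\mathbb N$-indexed colimit and then pass to the cofinal diagonal, and you package both directions of $(\mathrm i)\Leftrightarrow(\mathrm{iii})$ into the single auxiliary fact that pro-zero is detected by vanishing of $\varinjlim\Hom_R(-,I)$ for all injective $I$---which is exactly the paper's embedding-into-an-injective argument for $(\mathrm{iii})\Rightarrow(\mathrm i)$.
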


\begin{proof}
	The equivalence of (i) and (ii) holds trivially. Next we show the equivalence of (iii) and (iv). For an $R$-module $X$ and an element $y \in R$ there is an exact sequence 
	\[
	0 \to \Gamma_y(X) \to X \to X_y \to \check{H}^1_y(X) \to 0, 
	\]
	where $X_y$ denotes the localization of $X$ with respect to the element $y \in R$. Note that $X \to X_y$ is 
	just the \v{C}ech complex with respect to the single element $y \in R$. Next note that $\check{H}^1_y(X) = 0$ 
	if and only if $X \to X_y$ is onto or equivalently $X/\Gamma_y(X)$ is $y$-divisible. Applying this observation to 
	$\Gamma_{\xx_{i-1}}(\Hom_R(M,I))$ proves the equivalence of (iii) and (iv). 
	
	We use the abbreviations of the beginning of this section. Now we prove (i) $\Longrightarrow$ (iii).  Applying the functor $\Hom_R(\cdot,I)$ to the pro-zero 
	sequence of Koszul homologies (4) at the beginning of this section yields homomorphisms that provide 
	a direct system 
	\[
	H^1(y^n; H^0(\xx^{(n)};\Hom_R(M,I))) \stackrel{y^{m-n}}{\longrightarrow} H^1(y^m; H^0(\xx^{(m)};\Hom_R(M,I))).
	\eqno (5)
	\]
	If the system in (4) is pro-zero it follows that 
	\[
	0 = \varinjlim H^1(y^n; H^0(\xx^{(n)};\Hom_R(M,I)))  \cong \check{H} ^1_y(\Gamma_{\xx}( \Hom_R(M,I)). 
	\]
	The previous isomorphism might be checked directly, or see  \cite[6.1.11]{SpSa}.
	
	For the proof of  (iii) $\Longrightarrow$ (i) we note that $\xx^{(n)}M :_M y^n/\xx^{(n)}M \cong 
	\Hom_R(R/y^nR, M/\xx^{(n)}M)$.  Now fix $n$ and choose an injection 
	$f : (\xx^{(n)}M :_M y^n)/\xx^{(n)}M \hookrightarrow J$ into an injective $R$-module $J$. 
	This defines an element 
	\[
	f \in \Hom_R(	\Hom_R(R/y^nR, M/\xx^{(n)}M),J) \cong H^1(y^n; H^0(\xx^{(n)}; \Hom_R(M, J)).
	\]	
	By the assumption $\varinjlim H^1(y^n; H^0(\xx^{(n)}; \Hom_R(M, J)) \cong 
	\check{H}^1_y(\Gamma_{\xx}(\Hom_R(M,I) ) = 0$. Whence there is an integer $m \geq n$ such that 
	the image of $f$ in $\Hom_R(	\Hom_R(R/y^mR, M/\xx^{(m)}M),J)$ vanishes. In other words 
	the composite of the maps 
	\[
	 \xx^{(m)}M :_M y^m/\xx^{(m)}M  \stackrel{y^{m-n}}{\longrightarrow}  \xx^{(n)}M :_M y^n/\xx^{(n)}M  
	 \stackrel{f}{\longrightarrow} J
	\]
	is zero. This finishes the proof since $f$ is injective.
\end{proof}

As a first application we get  extensions of some of the authors' result in \cite{SS} to the case of an 
$R$-module $M$. 

\begin{corollary} \label{cor-1}
	Let $x \in R$ denote an element and let $M$ denote an $R$-module. Then the following 
	conditions are equivalent. 
	\begin{itemize}
		\item[(i)] $M$ is of bounded $x$-torsion. 
		\item[(ii) ]$M \otimes_RF$ is of bounded $x$-torsion for any flat $R$-module. 
		\item[(iii)] $\check{H}^1_x(\Hom_R(M,I)) = 0$ for any injective $R$-module.
		\item[(iv)] $\Hom_R(M,I)/\Gamma_x(\Hom_R(M,I) ) $ is $x$-divisible. 
	\end{itemize}
\end{corollary}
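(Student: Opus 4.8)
The plan is to observe that this corollary is nothing but the case $k=1$ of Theorem \ref{thm-1}, so the work consists almost entirely of unwinding the notation. First I would recall the last sentence of Definition \ref{def-1}: a single element $x\in R$ is $M$-proregular if and only if $M$ is of bounded $x$-torsion. Hence condition (i) here is synonymous with saying that the one-term sequence $\xx = x$ is $M$-proregular, and likewise, applied to $M\otimes_R F$, condition (ii) here says exactly that the one-term sequence $x$ is $(M\otimes_R F)$-proregular for every flat $R$-module $F$.

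Next I would apply Theorem \ref{thm-1} to the sequence $\xx = x$, i.e. with $k=1$ and $x_1 = x$. The only index to check is $i=1$, for which the prefix $\xx_{i-1} = \xx_0$ is the empty sequence; the ideal it generates is the zero ideal, and $\Gamma_{(0)}$ is the identity functor on $R$-modules. Consequently condition (iii) of Theorem \ref{thm-1} reads $\check{H}^1_{x}(\Hom_R(M,I)) = 0$ for every injective $R$-module $I$, which is precisely (iii) here; and condition (iv) of Theorem \ref{thm-1}, with $\xx_1 = x$ so that $\Gamma_{\xx_1} = \Gamma_x$, reads that $\Hom_R(M,I)/\Gamma_x(\Hom_R(M,I))$ is $x$-divisible, which is precisely (iv) here. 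Since conditions (i)--(ii) of the corollary translate into conditions (i)--(ii) of Theorem \ref{thm-1} as noted above, the equivalence of (i)--(iv) of Theorem \ref{thm-1} immediately gives the equivalence of (i)--(iv) of the corollary.

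There is essentially no obstacle here; the only points worth a sentence of care are the bookkeeping for the empty prefix $\xx_0$ and the identification $\Gamma_{(0)} = \id$, and the elementary exact sequence $0 \to \Gamma_x(X) \to X \to X_x \to \check{H}^1_x(X) \to 0$ already invoked in the proof of Theorem \ref{thm-1}, which by itself makes the equivalence of (iii) and (iv) transparent for the particular module $X = \Hom_R(M,I)$. If a self-contained argument were preferred, one could instead reproduce verbatim the implications ``(i)$\Rightarrow$(iii)'' and ``(iii)$\Rightarrow$(i)'' from the proof of Theorem \ref{thm-1} with $\xx$ empty and $y = x$, but the reduction to Theorem \ref{thm-1} is shorter and is the route I would take.
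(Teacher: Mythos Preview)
Your proposal is correct and matches the paper's own proof exactly: the paper simply observes that this is the particular case $k=1$ of Theorem \ref{thm-1}, together with the remark that $x$ is $M$-proregular if and only if $M$ is of bounded $x$-torsion. Your additional unwinding of the empty prefix $\xx_0$ and $\Gamma_{(0)}=\id$ is just making explicit what the paper leaves implicit.
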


\begin{proof}
	This is a particular case of \ref{thm-1} for  $k = 1$. Namely $x$ is $M$-proregular 
	if and only if $M$ is of bounded $x$-torsion.
\end{proof}

For an $M$-regular sequence $\xx = x_1,\ldots,x_k$ it follows that $\xx^{(\underline{n}) } = 
x_1^{n_1}, \ldots,x_k^{n_k}$ is $M$-regular for any $k$-tupel $(n_1,\ldots,n_k) \in (\mathbb{N}_+)^k$. 
A corresponding result holds for $M$-proregular sequences. 

\begin{corollary} \label{cor-2}
	Let $\xx = x_1,\ldots,x_k$ be an ordered sequence of elements of $R$ and let $M$ be an $R$-module. 
	Then the following conditions are equivalent.
	\begin{itemize}
		\item[(i))] The sequence $\xx$ is $M$-proregular.
		\item[(ii)] There is an $\underline{n} \in (\mathbb{N}_+)^k$ such that the sequence 
		$\xx^{(\underline{n})}$ is $M$-proregular.
		\item[(iii)] The sequence $\xx^{(\underline{n})}$ is $M$-proregular for all $\underline{n} \in (\mathbb{N}_+)^k$.
	\end{itemize}
\end{corollary}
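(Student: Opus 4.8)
The plan is to reduce the entire statement to the cohomological characterization of Theorem~\ref{thm-1}, using condition~(iv) there. I would fix a tuple $\underline{n} = (n_1,\ldots,n_k) \in (\mathbb{N}_+)^k$, set $\yy = \xx^{(\underline{n})} = y_1,\ldots,y_k$ with $y_i = x_i^{n_i}$ and $\yy_{i-1} = y_1,\ldots,y_{i-1}$, and prove the single equivalence ``$\xx$ is $M$-proregular $\iff$ $\yy$ is $M$-proregular''. This suffices: (iii)$\Rightarrow$(ii) is trivial, while (ii)$\Rightarrow$(i) and (i)$\Rightarrow$(iii) both follow from this one equivalence (applied to the given $\underline{n}$ in the first case, and to an arbitrary $\underline{n}$ in the second).

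The first point I would record is that the torsion functors attached to the initial segments of $\xx$ and of $\yy$ agree. From the inclusions $(x_1,\ldots,x_j)^{jN} \subseteq (x_1^{n_1},\ldots,x_j^{n_j}) \subseteq (x_1,\ldots,x_j)$ with $N = \max_{l \le j} n_l$ (cf.\ the remark at the start of Section~1) one gets $\Rad(x_1,\ldots,x_j)R = \Rad(y_1,\ldots,y_j)R$, and hence $\Gamma_{\xx_j}(X) = \Gamma_{\yy_j}(X)$ for every $R$-module $X$ and every $j$. In particular, for each $i$ and each injective $R$-module $I$ the quotient
\[
Q_i(I) := \Gamma_{\xx_{i-1}}(\Hom_R(M,I))/\Gamma_{\xx_i}(\Hom_R(M,I))
\]
coincides with $\Gamma_{\yy_{i-1}}(\Hom_R(M,I))/\Gamma_{\yy_i}(\Hom_R(M,I))$ as an $R$-module. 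By Theorem~\ref{thm-1}(iv) — which is stated for an arbitrary ordered sequence, so it applies verbatim to $\yy$ — the sequence $\xx$ is $M$-proregular iff $Q_i(I)$ is $x_i$-divisible for all $i$ and all injective $I$, whereas $\yy$ is $M$-proregular iff $Q_i(I)$ is $x_i^{n_i}$-divisible for all $i$ and all injective $I$.

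The remaining step is the elementary observation that for any $R$-module $Q$, any $x \in R$, and any $n \ge 1$, one has $xQ = Q$ if and only if $x^nQ = Q$: the forward direction follows by iterating $x^nQ = x^{n-1}(xQ) = \cdots = Q$, and the converse from $Q = x^nQ \subseteq xQ \subseteq Q$. Applying this with $Q = Q_i(I)$, $x = x_i$, $n = n_i$ converts the two conditions of the previous paragraph into one another, which yields the equivalence and hence the corollary. I do not anticipate a genuine obstacle: once Theorem~\ref{thm-1} is invoked the statement dissolves into the triviality ``$x$-divisible $=$ $x^n$-divisible'' together with the radical-invariance of the torsion functor; the only things that require a moment's care are checking the ideal inclusions that give $\Gamma_{\xx_j} = \Gamma_{\yy_j}$, and noting that the characterization in Theorem~\ref{thm-1} is formulated for a general sequence so that it may be fed the sequence $\yy$ as well.
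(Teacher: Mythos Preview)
Your proposal is correct and follows essentially the paper's own route: both reduce the question to the cohomological characterization of Theorem~\ref{thm-1} and then observe that the condition there is insensitive to replacing each $x_i$ by a power. The only cosmetic difference is that the paper invokes condition~(iii) (where $\check{C}_{x_i}=\check{C}_{x_i^{n_i}}$ since $R_{x_i}=R_{x_i^{n_i}}$, so the vanishing conditions coincide on the nose) while you invoke condition~(iv) and unwind the equivalent elementary fact that $x_i$-divisibility equals $x_i^{n_i}$-divisibility.
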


\begin{proof}
	The statements are easy consequences of Theorem \ref{thm-1}  by condition (iii). 
\end{proof}
Next we provide an alternative proof of \cite[A.2.3]{SpSa}  that an $M$-proregular sequence 
is also $M$-weakly prregular. This is done by the characterization of \ref{thm-1}

\begin{theorem} \label{thm-2}
	Let $\xx = x_1,\ldots,x_k$ be an ordered sequence in $R$. Let $M$ denote an $R$-module. If 
	$\xx$ is $M$-proregular, then $\xx$ is also $M$-weakly proregular. 
\end{theorem}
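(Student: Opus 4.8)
The goal is to show that $M$-proregular implies $M$-weakly proregular. By Proposition~\ref{prop-3}, $M$-weak proregularity is equivalent to condition (iv): $\check{H}^i_{\xx}(\Hom_R(M,I)) = 0$ for all $i > 0$ and any injective $R$-module $I$. On the other hand, Theorem~\ref{thm-1}(iii) tells us that $M$-proregularity gives $\check{H}^1_{x_i}(\Gamma_{\xx_{i-1}}(\Hom_R(M,I))) = 0$ for $i = 1,\ldots,k$ and any injective $I$. So the task is to bridge these two statements: deduce the vanishing of the full multi-variable \v{C}ech cohomology $\check{H}^i_{\xx}(-)$ in positive degrees from the vanishing of the single-variable \v{C}ech $\check{H}^1_{x_i}$ applied to successive torsion submodules.

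First I would set $N = \Hom_R(M,I)$ and recall that, since $I$ is injective, $N$ is an injective $R$-module (this is the standard fact that $\Hom_R(-, I)$ sends flats to injectives, but here we need only that $\Hom_R(R, I) = I$ is injective; more precisely $N = \Hom_R(M,I)$ need not be injective in general — but what we do know is that $\Gamma_{x}(N)$-type computations behave well). The cleaner route: the \v{C}ech complex $\check{C}_{\xx}$ factors as an iterated tensor product $\check{C}_{x_1} \otimes_R \cdots \otimes_R \check{C}_{x_k}$, which yields a spectral sequence or, better, an inductive filtration. I would argue by induction on $k$. For $k = 1$ the claim is exactly Corollary~\ref{cor-1} (equivalence of (i) and (iii), reading (iii) as the $i=1$ \v{C}ech vanishing, and noting $\check{H}^i_x = 0$ automatically for $i > 1$). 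For the inductive step, write $\xx = \xx_{k-1}, x_k$ and use that $\check{C}_{\xx}(N) \cong \check{C}_{x_k}(\check{C}_{\xx_{k-1}}(N))$ as a double complex; the key input is that $\check{H}^0_{\xx_{k-1}}(N) = \Gamma_{\xx_{k-1}}(N)$ and the higher \v{C}ech cohomologies $\check{H}^j_{\xx_{k-1}}(N)$ for $0 < j < k$ vanish by the induction hypothesis applied to the subsequence $\xx_{k-1}$ (which is $M$-proregular since a truncation of an $M$-proregular sequence is $M$-proregular — this follows directly from Definition~\ref{def-1}, as the conditions for $i = 1,\ldots,k-1$ are exactly those for the shorter sequence).

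Given this, the spectral sequence of the double complex collapses: $E_2^{p,q} = \check{H}^p_{x_k}(\check{H}^q_{\xx_{k-1}}(N))$ has a single nonzero column $q = 0$ (plus possibly $q = k-1$, but I would handle this by being careful — actually the subsequence has length $k-1$ so $q$ ranges over $0, \ldots, k-1$; the induction hypothesis kills $q = 1, \ldots, k-1$, leaving only $q=0$). Hence $\check{H}^i_{\xx}(N) \cong \check{H}^i_{x_k}(\Gamma_{\xx_{k-1}}(N))$ for all $i$. For $i = 1$ this is zero by Theorem~\ref{thm-1}(iii) with the index $i = k$; for $i \geq 2$ it is zero because \v{C}ech cohomology with respect to a single element is concentrated in degrees $0$ and $1$. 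Therefore $\check{H}^i_{\xx}(\Hom_R(M,I)) = 0$ for all $i > 0$, and Proposition~\ref{prop-3} finishes the argument.

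**Expected main obstacle.** The delicate point is justifying the collapse of the spectral sequence — specifically, confirming that $\check{H}^q_{\xx_{k-1}}(N) = 0$ for $q$ in the full range $1 \leq q \leq k-1$, which is precisely where the induction hypothesis must be invoked with care, and checking that the torsion functor commutes appropriately so that $\check{H}^p_{x_k}(\Gamma_{\xx_{k-1}}(N))$ really is the $E_2$ term in the surviving column. One must also verify that forming $\Hom_R(M, I)$ and then $\Gamma_{\xx_{k-1}}$ does not disturb the hypotheses: here the cited reference \cite[6.1.11]{SpSa}, already used in the proof of Theorem~\ref{thm-1}, should supply the needed compatibility between \v{C}ech cohomology, torsion functors, and tensor/Hom. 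An alternative that sidesteps spectral sequences entirely is to use condition (iii) of Proposition~\ref{prop-3}: show directly that $\varinjlim_n H^i(\xx^{(n)}; \Hom_R(M,I)) = 0$ for $i > 0$ by the Koszul-complex interpretation (4) at the start of this section, peeling off $x_k$ via the short exact sequence of Koszul complexes $0 \to K_\bullet(x_k^n) \otimes H_0(\xx_{k-1}^{(n)}; -) \to H_0(K_\bullet(\xx_{k-1}^{(n)}, x_k^n); -) \to \cdots$ and inducting; this is the route \cite[A.2.3]{SpSa} presumably takes, and the present theorem advertises itself as an alternative via Theorem~\ref{thm-1}, so the spectral-sequence/\v{C}ech route is the intended one.
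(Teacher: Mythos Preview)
Your proposal is correct and takes essentially the same approach as the paper: induction on the length of the sequence, using the truncation $\xx_{k-1}$ to kill higher \v{C}ech cohomology by the inductive hypothesis, then invoking Theorem~\ref{thm-1}(iii) for the remaining $\check{H}^1_{x_k}(\Gamma_{\xx_{k-1}}(\Hom_R(M,I)))$. The only cosmetic difference is that the paper phrases the inductive step via the short exact sequence $0 \to \check{H}^1_y(\check{H}^{i-1}_{\xx}(N)) \to \check{H}^i_{\xx,y}(N) \to \check{H}^0_y(\check{H}^i_{\xx}(N)) \to 0$ from \cite[6.1.11]{SpSa} rather than the equivalent two-column spectral sequence you describe.
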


\begin{proof}
	We fix $l \in \{1,\ldots,k\}$ and put $\xx = x_1,\ldots,x_{l-1}$ and $y = x_l$. Then we show 
	by induction on $l$ that $\check{H}^i_{\xx, y} (\Hom_R(M,I) ) = 0$ for all $i >0$. If $l=k$  this proves the 
	claim by virtue of Proposition \ref{prop-3}.  If $l=1$ then $y$ is  $M$-proregular if and only if $M$ is of 
	bounded $y$-torsion. Whence the claim is true by \ref{cor-2}. Let $l >1$. Then there is the 
	short exact sequence 
	\[
	0 \to \check{H}^1_y(\check{H}^{i-1}_{\xx}(\Hom_R(M,I)) \to \check{H}^i_{\xx,y}(\Hom_R(M,I) ) \to 
	\check{H}^0_y(\check{H}^i_{\xx}(\Hom_R(M,I)) \to 0
	\]
	for all $i $ (see \cite[6.1.11]{SpSa}). By the induction hypothesis $\check{H}^i_{\xx}(\Hom_R(M,I)) =0$ for all $i >0$ and therefore
	$ \check{H}^i_{\xx,y}(\Hom_R(M,I) ) = 0$ for all $ i > 1$. The vanishing for $i =1$ holds since 
	$ \check{H}^1_y(\check{H}^0_{\xx}(\Hom_R(M,I)) = 0$ as it follows by the assumption (see Theorem  \ref{thm-1}).
	This completes the inductive step and finishes the proof. 
\end{proof}

\section{\v{C}ech (co-)complexes}
In order to continue with the study of weakly proregular sequences let us recall 
a few more definitions. 

\begin{notation} \label{not-2}
	(A) Let $\xx = x_1,\ldots,x_k$ denote a sytem of elements of $R$. As in \ref{not-1} we denote 
	by $\check{C}_{\xx}$ 
	the corresponding \v{C}ech complex. In \cite{SpSa} and \cite{Sp} there are constructions of two bounded complexes of free $R$-mdules $\check{L}_{\xx}$ and $\mathcal{L}_{\xx}$ and quasi-isomorphisms 
	$\check{L}_{\xx} \qism \mathcal{L}_{\xx} \qism \check{C}_{\xx}$. That is there are bounded free resolutions 
	of the  \v{C}ech complex  $\check{C}_{\xx}$  that is a complex of flat $R$-modules. \\
	(B) Let $M$ denote an $R$-module. With the previous notation the complex ${\RHom}_R( \check{C}_{\xx},M)$ 
	in the derived category has the following two representatives $\Hom_R(\mathcal{L}_{\xx} ,M) \qism 
    \Hom_R(\check{L}_{\xx} ,M)$ . \\
	(C) For an integer $i \in \mathbb{Z}$ we put 
	\[
	\check{H}_i^{\xx}(M) := H_i(\Hom_R(\mathcal{L}_{\xx} ,M) ) \cong 
	H_i( \Hom_R(\check{L}_{\xx} ,M))
	\]
	for the \v{C}ech homology of $M$ with respect to $\xx$. \\
	(D) For an $R$-module $M$ and an ideal $\mathfrak{a} \subset R$  we write $\Lambda^{\mathfrak{a}} (M) $ for the $\mathfrak{a}$-adic completion of $M$, i.e.  $ \Lambda^{\mathfrak{a}} (M) = \varprojlim M/\mathfrak{a}^n M$. The right derived functors of $\Lambda^{\mathfrak{a}} (M) $ are denoted by $\Lambda^{\mathfrak{a}}_i(M), i \in \mathbb{Z}$.  Note that in general $\Lambda^{\mathfrak{a}} (M) \not= \Lambda^{\mathfrak{a}}_0 (M) $ since $\Lambda^{\mathfrak{a}}$ is not left exact. 	
 \end{notation}

In the following we shall prove a relative version of one of the main results of \cite{SpSa}. We shall 
recover it for the case of $M = R$. 

\begin{theorem} \label{thm-3}
	Let $\xx = x_1,\ldots,x_k$ denote a sequence  of elements of $R$ and $\mathfrak{a} = \xx R$.  
	Let $M$ be an $R$-module.  Suppose that  $\xx$ is $M$-weakly proregular. 
	\begin{itemize}
		\item[(a)]   $\check{H}^i_{\xx}( \Hom_R(M,I)) = 0$ for all 
				$i > 0$ and  $\check{C}_{\xx} \otimes_R \Hom_R(M,I)$ is a right resolution of $\Gamma_{\mathfrak{a}}(\Hom_R(M,I))$ for any injective $R$-module $I$. 
				\item[(b)]  $	{\check{H}}_i^{\xx}(M \otimes_R F) = 0$  for all 	$i > 0$ and 
				$\Hom_R(\mathcal{L}_{\xx},M\otimes_R F)$ is a left resolution of $\Lambda^{\mathfrak{a}}(M\otimes_RF)$
				for any flat $R$-module $F$.
	\end{itemize}
\end{theorem}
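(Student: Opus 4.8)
The plan is to treat the two parts separately, since (a) is essentially immediate from what has already been assembled, while (b) is the relative analogue of the corresponding statement in \cite{SpSa} and needs the telescope model of \v{C}ech homology.

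\emph{Part (a).} Here I would simply combine Proposition \ref{prop-3} with \ref{not-1}. Since $\xx$ is $M$-weakly proregular, the implication (i) $\Rightarrow$ (iv) of \ref{prop-3} already gives $\check{H}^i_{\xx}(\Hom_R(M,I)) = 0$ for all $i>0$ and every injective $R$-module $I$. On the other hand $\check{C}_{\xx}\otimes_R\Hom_R(M,I) = \check{C}_{\xx}(\Hom_R(M,I))$ by definition (\ref{not-1}(B)), and by \ref{not-1}(C) its zeroth cohomology is $\Gamma_{\mathfrak{a}}(\Hom_R(M,I))$. Hence the augmented complex
\[
0 \to \Gamma_{\mathfrak{a}}(\Hom_R(M,I)) \to \check{C}^0_{\xx}(\Hom_R(M,I)) \to \check{C}^1_{\xx}(\Hom_R(M,I)) \to \cdots
\]
is exact, which is precisely the assertion that $\check{C}_{\xx}\otimes_R\Hom_R(M,I)$ is a right resolution of $\Gamma_{\mathfrak{a}}(\Hom_R(M,I))$.

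\emph{Part (b).} I would fix a flat $R$-module $F$, write $N = M\otimes_R F$, and argue as follows. By the equivalence (i) $\Leftrightarrow$ (ii) of \ref{prop-3}, the inverse system $\{H_i(\xx^{(n)};N)\}_{n\geq 1}$ is pro-zero for every $i>0$; a pro-zero tower is Mittag-Leffler with vanishing limit, so $\varprojlim_n H_i(\xx^{(n)};N) = 0$ and $\varprojlim^1_n H_i(\xx^{(n)};N) = 0$ for all $i>0$. Next I would invoke the telescope description of \v{C}ech homology: $\Hom_R(\check{L}_{\xx},N)$, hence also $\Hom_R(\mathcal{L}_{\xx},N)$, is a model for the homotopy inverse limit of the Koszul cochain complexes $K^{\bullet}(\xx^{(n)};N)$, so there are short exact sequences
\[
0 \to \varprojlim^1_n H_{i+1}(\xx^{(n)};N) \to \check{H}_i^{\xx}(N) \to \varprojlim_n H_i(\xx^{(n)};N) \to 0
\]
for all $i$ (see \cite[Chapter 6]{SpSa}). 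For $i>0$ both outer terms vanish by the previous step, so $\check{H}_i^{\xx}(N) = 0$. For $i=0$ the left-hand term involves $H_1$ and so vanishes, while $H_0(\xx^{(n)};N) = N/\xx^{(n)}N$; by the inclusions $\xx^{nk}N\subseteq\xx^{(n)}N\subseteq\xx^{n}N$ recorded in Section 1 the towers $\{N/\xx^{(n)}N\}_n$ and $\{N/\mathfrak{a}^n N\}_n$ are mutually cofinal, whence $\check{H}_0^{\xx}(N)\cong\varprojlim_n N/\mathfrak{a}^n N = \Lambda^{\mathfrak{a}}(N)$. Thus $\Hom_R(\mathcal{L}_{\xx},N)$ has homology concentrated in degree $0$, equal there to $\Lambda^{\mathfrak{a}}(N)$, and the natural surjection of its degree-$0$ term onto $\Lambda^{\mathfrak{a}}(N)$ exhibits it as a left resolution of $\Lambda^{\mathfrak{a}}(M\otimes_R F)$.

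\emph{Where the difficulty sits.} Almost everything here is formal: Proposition \ref{prop-3} provides both the vanishing of the \v{C}ech cohomology of $\Hom_R(M,I)$ and the pro-zero property of the Koszul homology of $M\otimes_R F$, and the passage from ``pro-zero'' to ``$\varprojlim=\varprojlim^1=0$'' is elementary, as is the cofinality of the $\xx^{(n)}$-adic and $\mathfrak{a}$-adic filtrations. The step that genuinely has to be set up — or quoted carefully from \cite{SpSa} — is that the free complexes $\check{L}_{\xx}$ and $\mathcal{L}_{\xx}$, rather than an arbitrary free resolution of $\check{C}_{\xx}$, realize the homotopy inverse limit of the Koszul complexes, so that the $\varprojlim$–$\varprojlim^1$ sequence above is actually available; this is the one place where the concrete telescope construction is indispensable, and it is the main obstacle in making the argument rigorous.
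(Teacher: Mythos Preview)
Your proposal is correct and follows essentially the same route as the paper: part (a) is read off from Proposition \ref{prop-3} together with $\check{H}^0_{\xx}\cong\Gamma_{\mathfrak a}$, and part (b) uses the $\varprojlim/\varprojlim^1$ short exact sequence for $\check{H}_i^{\xx}$ (quoted in the paper as \cite[6.3.2(b)]{SpSa}) combined with the pro-zero information from \ref{prop-3} and the cofinality of the $\xx^{(n)}$- and $\mathfrak a$-adic filtrations. Your closing remark is on target: the only nontrivial external input is precisely that telescope/Milnor sequence for the free models $\check{L}_{\xx}\qism\mathcal{L}_{\xx}$, which the paper likewise imports from \cite{SpSa} and \cite{Sp}.
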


\begin{proof}
	The vanishing result in (a) is shown in  \ref{prop-3}.  Moreover $\check{H}^0_{\xx}( \Hom_R(M,I)) 
	\cong \Gamma_{\mathfrak{a}}(\Hom_R(M,I)$. Now the natural morphism $\Gamma_{\mathfrak{a}}(\Hom_R(M,I) 
	\to \check{C}_{\xx} \otimes_R \Hom_R(M,I)$
   proves (a). 
    
     Now let us prove (b). By view of \cite[6.3.2 (b)]{SpSa} there are  the following short exact sequences 
    	\[
    	0 \to \varprojlim{}^1 H_{i+1}(\xx^{(n)}; M\otimes_R F)\to H_i({\RHom}_R( \check{C}_{\xx},M \otimes_RF) )
    	\to   \varprojlim H_i(\xx^{(n)}; M\otimes_R F) \to 0
    	\]
    	for all $i \in \mathbb{N}$. By the assumption $\{H_i(x^{(n)};M \otimes_R F)\}_{n \geq 1}$ is prozero for all 
    	$i > 0$. This yields that 
    	\[
    	\varprojlim{}^1 H_i(\xx^{(n)}; M\otimes_R F) = \varprojlim H_i(\xx^{(n)}; M\otimes_R F)  =0 
    	\]
    	for all $i >0$. This proves the the vanishing part.  For $i= 0$ we get the isomorphisms 
    	\[
    	H_i({\RHom}_R( \check{C}_{\xx},M \otimes_RF) ) \cong \varprojlim H_0(\xx^{(n)}; M\otimes_R F)  
    	\cong \Lambda^{\mathfrak{a}}(M\otimes_RF)
    	\]
    	as easily seen since $\xx R = \mathfrak{a}$. By \cite[8.2.1]{SpSa} and \cite[4.1 (B)]{Sp} 
    	there is a natural morphism $ \Hom_R(\mathcal{L}_{\xx},M\otimes_R F)\to\Lambda^{\mathfrak{a}}(M\otimes_RF)$ 
    	which completes the proof of (b).
\end{proof}

While the condition (a) in Theorem \ref{thm-3} is equivalent to the statement that $\xx$ is $M$-weakly 
proregular (see \ref{prop-3} ) this does not hold for (b). The following example is motivated by Lipman's 
example in \cite{lip2}.

\begin{example} \label{ex-2}
	Let $R = \Bbbk [|x|]$ denote the formal power series ring in the variable $x$ over the field 
	$\Bbbk$. Then define $S = \prod_{n \geq 1} R/x^nR$. By the component wise  operations $S$ becomes 
	a commutative ring. The natural map $R \to S, r \to (r+ x^n)_{n \geq 1}$, is a ring homomorphism and $x \mapsto {\bf x} := (x+x^n)_{n \geq 1}$.  As a direct product of $xR$-complete modules $S$ is an $xR$-complete $R$-module (see \cite[2.2.7]{SpSa}). 
	Since $R$ is a Noetherian ring $x$ is $R$-weakly proregular and $\check{H}^x_i(S) \cong H_i(\Hom_R(\mathcal{L}_x,S ))= 0$ for $i > 0$ and  $\check{H}^x_0(S) \cong H_0(\Hom_R(\mathcal{L}_x,S )) \cong S$. 
	
	Moreover, by the change of rings there is an isomorphism $\Hom_R(\mathcal{L}_x, S) \cong \Hom_S(\mathcal{L}_{{\bf x}}, S)$. That is, 
	$\check{H}_i^{{\bf x}} (S) = 0$ for $i > 0$ and $\check{H}_0^{{\bf x}}(S)\cong S$.  
	Now note that  $S$ is not of bounded ${\bf x}$-torsion as easily seen. It follows also that $S$ is $\bf x$-adic complete and $S$ is not a coherent ring.
\end{example}

In the following we shall apply the previous result to the situation of a complex of 
flat $R$-modules. It is a relative version of \cite[7.5.16]{SpSa}.

\begin{corollary} \label{cor-3}
	We fix the notation of \ref{thm-3}. Let $F_{\bullet}$
  	denote a complex of flat $R$-modules. Then the natural map 
	\[
	\Hom_R(\mathcal{L}_{\xx},M\otimes_R F_{\bullet}) \to \Lambda^{\mathfrak{a}}(M\otimes_RF_{\bullet})
	\]
	is a quasi-isomorphisms. Moreover, if $F_{\bullet}$ is a complex of finitely generated free $R$-modules, 
	then the natural map 
	\[
		\Hom_R(\mathcal{L}_{\xx},M\otimes_R F_{\bullet}) \to \Lambda^{\mathfrak{a}}(M)\otimes_R F_{\bullet}
	\]
	is a quasi-isomrphism.
\end{corollary}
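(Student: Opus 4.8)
The plan is to deduce both assertions from the degreewise statement of Theorem~\ref{thm-3}(b) by a totalisation argument, exploiting that $\mathcal{L}_{\xx}$ is a \emph{bounded} complex of free $R$-modules.

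For the first assertion I would regard $\Hom_R(\mathcal{L}_{\xx}, M\otimes_R F_\bullet)$ as the total complex of a double complex, one of whose directions (the one coming from $\mathcal{L}_{\xx}$) occupies only finitely many degrees, and regard $\Lambda^{\ag}(M\otimes_R F_\bullet)$, the degreewise $\ag$-adic completion, as a double complex concentrated in that direction in a single spot. The natural map of the statement is then a morphism of double complexes which in each $F_\bullet$-degree $q$ is the natural morphism $\Hom_R(\mathcal{L}_{\xx}, M\otimes_R F_q) \to \Lambda^{\ag}(M\otimes_R F_q)$ supplied by Theorem~\ref{thm-3}(b); since each $F_q$ is flat, that morphism is a quasi-isomorphism, because its source has $\check{H}_i^{\xx}(M\otimes_R F_q)=0$ for $i>0$ and $\check{H}_0^{\xx}(M\otimes_R F_q)\cong\Lambda^{\ag}(M\otimes_R F_q)$ via the map. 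I would then run the spectral sequence obtained by first taking homology in the $\mathcal{L}_{\xx}$-direction: only finitely many rows are involved, so it converges and $\mathrm{Tot}^{\oplus}=\mathrm{Tot}^{\Pi}$ regardless of whether $F_\bullet$ is bounded, and on the $E_1$-page the comparison map is the above collection of quasi-isomorphisms, hence an isomorphism. By the comparison theorem for spectral sequences the totalised map is a quasi-isomorphism; equivalently, its mapping cone is the totalisation of a double complex that is bounded in one direction and has acyclic rows, hence acyclic by the acyclic assembly lemma.

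For the second assertion, assume $F_\bullet$ is a complex of finitely generated free $R$-modules, say $F_q\cong R^{r_q}$. Since $\Hom_R(\mathcal{L}_{\xx},-)$ and $-\otimes_R M$, as well as $\Lambda^{\ag}$, commute with finite direct sums, there are canonical isomorphisms of complexes $\Hom_R(\mathcal{L}_{\xx}, M\otimes_R F_\bullet)\cong\Hom_R(\mathcal{L}_{\xx}, M)\otimes_R F_\bullet$ and $\Lambda^{\ag}(M\otimes_R F_\bullet)\cong\Lambda^{\ag}(M)\otimes_R F_\bullet$, and under them the map in question becomes $\bigl(\Hom_R(\mathcal{L}_{\xx}, M)\to\Lambda^{\ag}(M)\bigr)\otimes_R F_\bullet$. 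Because $\xx$ is $M$-weakly proregular, Theorem~\ref{thm-3}(b) with $F=R$ says that $\Hom_R(\mathcal{L}_{\xx}, M)\to\Lambda^{\ag}(M)$ is a quasi-isomorphism, so its mapping cone $C_\bullet$ is a \emph{bounded} acyclic complex; tensoring $C_\bullet$ with the flat modules $F_q$ keeps each $C_\bullet\otimes_R F_q$ acyclic, and, $C_\bullet$ being bounded, the same finite-rows spectral sequence (or the acyclic assembly lemma) shows that $\mathrm{Tot}(C_\bullet\otimes_R F_\bullet)$ is acyclic. This gives the claim; alternatively it follows from the first assertion by composing with the isomorphism $\Lambda^{\ag}(M\otimes_R F_\bullet)\cong\Lambda^{\ag}(M)\otimes_R F_\bullet$.

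The only step I expect to require real care is the treatment of a possibly unbounded $F_\bullet$: one must always keep the \emph{other} direction (coming from $\mathcal{L}_{\xx}$, respectively from the cone $C_\bullet$) bounded, so that totalisation is unambiguous and the spectral sequences converge; with that in hand the argument is a routine reduction to Theorem~\ref{thm-3}(b). A minor bookkeeping point is to verify that the ``natural map'' of the statement really is the degreewise comparison map, respectively its tensor product with $F_\bullet$, so that the spectral-sequence comparison theorem applies.
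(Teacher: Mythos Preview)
Your proposal is correct and matches the paper's approach: the first assertion is obtained by viewing $\Hom_R(\mathcal{L}_{\xx}, M\otimes_R F_\bullet)$ as the total complex of a double complex with the $\mathcal{L}_{\xx}$-direction bounded and invoking the degreewise quasi-isomorphism from Theorem~\ref{thm-3}(b), while the second assertion is deduced (in the paper, and in your ``alternatively'' clause) from the first via the isomorphism $\Lambda^{\mathfrak{a}}(M\otimes_R F_q)\cong\Lambda^{\mathfrak{a}}(M)\otimes_R F_q$ coming from the fact that tensoring with a finitely generated free module commutes with inverse limits. Your write-up is more explicit about convergence and totalisation than the paper's, which simply cites \cite[4.1.3]{SpSa}, but the underlying argument is the same.
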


\begin{proof}
	Let $F_{\bullet} = \{F_q, d_q\}_{q \in \mathbb{Z}}$ be the complex of flat $R$-modules . Then the natural map 
	 \[
	 	\Hom_R(\mathcal{L}_{\xx},M\otimes_R F_q) \to \Lambda^{\mathfrak{a}}(M \otimes_R F_q)
	 \]
	 is a quasi-isomorphism for all $q \in \mathbb{Z}$ (see \ref{thm-3}).   Since $	\Hom_R(\mathcal{L}_{\xx},M\otimes_R F_{\bullet})$ 
	 is the single complex associated to the double complex $\Hom_R(\mathcal{L}_{\xx}^p,M\otimes_R F_q)$ 
	 and because $\mathcal{L}_{\xx}$ is bounded the first claim follows (see also \cite[4.1.3]{SpSa}). 
	 For the second note that $F_q, q \in \mathbb{Z},$ is a finitely generated free $R$-module. Its tensor product commutes with inverse limits. 
\end{proof}

In the following we specify the previous result for a finitely generated $R$-module $N$. 

\begin{corollary} \label{cor-4}
	We use the assumption of \ref{thm-3}. Let $N$ denote a finitely generated $R$-module 
	with $L_{\bullet} \qism N$ a free resolution by finitely generated free $R$-modules. Then 
	there are isomorphisms 
	\[
	\check{H}_i^{\xx} (M\otimes_R L_{\bullet}) \cong \Tor_i^R(\Lambda^{\mathfrak{a}}(M), N)
	\]
	for all $i \in \mathbb{Z}$ and a spectral sequence 
	\[
	E^2_{i,j} = \check{H}^{\xx}_i(\Tor_j^R(M,N) ) \Longrightarrow E^{\infty}_{i+j}  = \Tor_{i+j}^R(\Lambda^{\mathfrak{a}}(M),N).	
	\]
\end{corollary}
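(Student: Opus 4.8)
The plan is to derive both assertions from Corollary \ref{cor-3} by feeding in the finite free resolution $L_{\bullet} \qism N$ in place of the generic complex $F_{\bullet}$. First I would invoke the second part of Corollary \ref{cor-3}: since $L_{\bullet}$ is a complex of finitely generated free $R$-modules, the natural map
\[
\Hom_R(\mathcal{L}_{\xx}, M \otimes_R L_{\bullet}) \to \Lambda^{\mathfrak{a}}(M) \otimes_R L_{\bullet}
\]
is a quasi-isomorphism. Taking homology on the left gives $\check{H}_i^{\xx}(M \otimes_R L_{\bullet})$ by the definition in \ref{not-2}(C), while taking homology on the right gives $H_i(\Lambda^{\mathfrak{a}}(M) \otimes_R L_{\bullet})$; and since $L_{\bullet} \qism N$ is a free resolution by finitely generated frees, the latter computes $\Tor_i^R(\Lambda^{\mathfrak{a}}(M), N)$ for all $i$. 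This yields the claimed isomorphisms.

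For the spectral sequence I would set up the hyper-homology spectral sequence of the double complex computing $\check{H}_i^{\xx}(M \otimes_R L_{\bullet})$. Concretely, write $\Hom_R(\mathcal{L}_{\xx}, M \otimes_R L_{\bullet})$ as the total complex of the double complex $\Hom_R(\mathcal{L}_{\xx}^p, M \otimes_R L_q)$. Filtering by the $L$-degree $q$ and using that $\mathcal{L}_{\xx}$ is a bounded complex of flat $R$-modules (so $\Hom_R(\mathcal{L}_{\xx}, -)$ behaves well and the spectral sequence converges), the $E^1$-page involves $\check{H}^{\xx}_i(M \otimes_R L_q)$; since $L_q$ is free, taking homology in the $q$-direction next produces $\check{H}^{\xx}_i(\Tor_j^R(M,N))$ on the $E^2$-page. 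The abutment is $\check{H}_i^{\xx}(M \otimes_R L_{\bullet})$, which by the first part of the corollary equals $\Tor_{i+j}^R(\Lambda^{\mathfrak{a}}(M), N)$; this gives exactly the stated spectral sequence $E^2_{i,j} = \check{H}^{\xx}_i(\Tor_j^R(M,N)) \Longrightarrow \Tor_{i+j}^R(\Lambda^{\mathfrak{a}}(M),N)$.

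The main obstacle, and the point requiring care, is the convergence and identification of the $E^2$-term of the second spectral sequence: one must make sure that in the double complex $\Hom_R(\mathcal{L}_{\xx}^p, M \otimes_R L_q)$ the spectral sequence obtained by first taking homology in $p$ (the \v{C}ech direction) and then in $q$ really has $E^1_{q,i} = \check{H}^{\xx}_i(M \otimes_R L_q)$ and $E^2_{j,i} = \check{H}^{\xx}_i(\Tor_j^R(M,N))$. This is where boundedness of $\mathcal{L}_{\xx}$ and finite generation of the $L_q$ are essential: boundedness guarantees the total complex is well-behaved and the spectral sequences converge (compare \cite[4.1.3]{SpSa}), and finite generation lets homology in the $q$-direction of $M \otimes_R L_{\bullet}$ compute $\Tor_j^R(M,N)$ while commuting past the flat modules $\mathcal{L}_{\xx}^p$. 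Once these identifications are in place, everything else is formal, and the first isomorphism of the corollary supplies the abutment.
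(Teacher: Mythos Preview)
Your derivation of the isomorphisms $\check{H}_i^{\xx}(M\otimes_R L_{\bullet}) \cong \Tor_i^R(\Lambda^{\mathfrak{a}}(M),N)$ is exactly the paper's: invoke the second quasi-isomorphism of Corollary~\ref{cor-3} and take homology. That part is fine.

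The spectral sequence argument, however, has the order of the two homologies reversed. You write that one first takes homology in the \v{C}ech direction~$p$, obtaining $E^1_{q,i}=\check{H}^{\xx}_i(M\otimes_R L_q)$, and then in~$q$ to reach $E^2_{j,i}=\check{H}^{\xx}_i(\Tor_j^R(M,N))$. But the $q$-homology of $\check{H}^{\xx}_i(M\otimes_R L_q)\cong \check{H}^{\xx}_i(M)\otimes_R L_q$ (using that $L_q$ is finitely generated free) is $\Tor_j^R(\check{H}^{\xx}_i(M),N)$, not $\check{H}^{\xx}_i(\Tor_j^R(M,N))$. In fact, under the hypotheses of Theorem~\ref{thm-3}(b) one has $\check{H}^{\xx}_i(M\otimes_R L_q)=0$ for $i>0$, so your filtration gives a spectral sequence that degenerates at $E^1$ and merely reproves the first isomorphism; it does not produce the stated $E^2$-page.

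The filtration that yields the statement is the \emph{other} one: first take homology in~$q$. Since each $\mathcal{L}_{\xx}^p$ is a \emph{free} $R$-module (see \ref{not-2}(A)), the functor $\Hom_R(\mathcal{L}_{\xx}^p,-)$ is exact, so
\[
E^1_{p,j}=H_j\bigl(\Hom_R(\mathcal{L}_{\xx}^p, M\otimes_R L_{\bullet})\bigr)\cong \Hom_R\bigl(\mathcal{L}_{\xx}^p,\Tor_j^R(M,N)\bigr),
\]
and then taking homology in~$p$ gives $E^2_{i,j}=\check{H}^{\xx}_i(\Tor_j^R(M,N))$. Convergence is guaranteed by the boundedness of $\mathcal{L}_{\xx}$, and the abutment is identified via the first part. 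This is what the paper means by ``the spectral sequence of the double complex''. (Note also that ``flat'' is not enough for $\Hom_R(\mathcal{L}_{\xx}^p,-)$ to commute with homology; you need that $\mathcal{L}_{\xx}^p$ is free.)
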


\begin{proof}
	The isomorphisms are  an immediate consequence of \ref{cor-3}. The spectral sequence is just the 
	spectral sequence of the double complex (see e.g. \cite{Rj}).
\end{proof}

In the following we shall recall a particular case of one of the main results of \cite{SpSa}. 

\begin{corollary} \label{cor-5}
	Let $\xx = x_1,\ldots,x_k$ denote a sequence  of elements of $R$ and $\mathfrak{a} = \xx R$.  Suppose that $\xx$ is weakly proregular. Then $\check{H}_i^{\xx}(N) \cong \Lambda_i^{\mathfrak{a}}(N)$ for all $i \in \mathbb{N}$ 
	and any $R$-module $N$. 
\end{corollary}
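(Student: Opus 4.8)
The plan is to deduce Corollary \ref{cor-5} as the special case $M = R$ of Corollary \ref{cor-4}, combined with a base-change identification of $\Lambda^{\mathfrak a}$-derived functors. First I would observe that when $M = R$, the hypothesis ``$\xx$ is $M$-weakly proregular'' in \ref{thm-3} becomes simply ``$\xx$ is weakly proregular'', which is exactly what we assume. So \ref{cor-4} applies with $M = R$: for any finitely generated $R$-module $N$ with a free resolution $L_{\bullet} \qism N$ by finitely generated free modules, we get isomorphisms
\[
\check{H}_i^{\xx}(L_{\bullet}) \cong \Tor_i^R(\Lambda^{\mathfrak a}(R), N) \quad \text{for all } i.
\]
Since $\Lambda^{\mathfrak a}(R)$ is a flat $R$-module when $R$ is Noetherian, but \emph{not} in general, I cannot simply collapse the $\Tor$; instead I would note $\check{H}_i^{\xx}(L_{\bullet}) = H_i(\Hom_R(\mathcal L_{\xx}, L_{\bullet}))$ and that applying $\Hom_R(\mathcal L_{\xx}, -)$ to $L_{\bullet} \qism N$ and taking homology computes, by the very definition of the right derived functors of $\Lambda^{\mathfrak a}$ in \ref{not-2}(D) together with the quasi-isomorphism $\Hom_R(\mathcal L_{\xx}, F_{\bullet}) \qism \Lambda^{\mathfrak a}(F_{\bullet})$ for flat complexes (the first map of \ref{cor-3}), exactly $\Lambda^{\mathfrak a}_i(N)$.

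More precisely, the argument I would write is: $L_{\bullet}$ is a complex of (finitely generated) free, hence flat, $R$-modules, so by \ref{cor-3} with $M = R$ the natural map $\Hom_R(\mathcal L_{\xx}, L_{\bullet}) \to \Lambda^{\mathfrak a}(L_{\bullet})$ is a quasi-isomorphism. Therefore
\[
\check{H}_i^{\xx}(L_{\bullet}) = H_i(\Hom_R(\mathcal L_{\xx}, L_{\bullet})) \cong H_i(\Lambda^{\mathfrak a}(L_{\bullet})) = \Lambda^{\mathfrak a}_i(N),
\]
the last equality being the definition of the right derived functors of $\Lambda^{\mathfrak a}$ applied to the resolution $L_{\bullet} \qism N$. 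Combining this with the isomorphism $\check{H}_i^{\xx}(L_{\bullet}) \cong \check{H}_i^{\xx}(N)$ — which holds because $N$ is finitely generated, so $\mathcal L_{\xx}$ being a bounded complex of free modules commutes appropriately, or simply because one of the two computations of $\check H_i^{\xx}(N)$ via $L_\bullet \qism N$ is the definition used in \ref{cor-4} — gives $\check{H}_i^{\xx}(N) \cong \Lambda^{\mathfrak a}_i(N)$ for finitely generated $N$.

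To pass from finitely generated $N$ to an arbitrary $R$-module $N$, I would use a direct limit argument: every $R$-module is the filtered colimit of its finitely generated submodules, and both sides commute with filtered colimits. For the left side, $\check H_i^{\xx}(-) = H_i(\Hom_R(\mathcal L_{\xx}, -))$ commutes with filtered colimits since $\mathcal L_{\xx}$ is a bounded complex of finitely generated free modules and homology commutes with filtered colimits. For the right side, $\Lambda^{\mathfrak a}_i$ commutes with filtered colimits by \cite[Section 4]{SpSa} (or can be checked directly from the $\varprojlim/\varprojlim^1$ description together with the Mittag-Leffler behaviour supplied by weak proregularity). I expect the main obstacle to be precisely this last step: verifying that the derived completion functors $\Lambda^{\mathfrak a}_i$ commute with filtered colimits is a genuine subtlety in the non-Noetherian setting — one must be careful that the relevant $\varprojlim^1$-terms behave well — and it is where I would most carefully cite the corresponding statement in \cite{SpSa} rather than reprove it. Alternatively, if the statement is intended only for finitely generated $N$ (the phrase ``any $R$-module'' notwithstanding), the colimit step is unnecessary and the proof is complete at the end of the previous paragraph.
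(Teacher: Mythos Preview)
Your core computation is exactly the paper's argument: apply \ref{cor-3} with $M=R$ to a resolution of $N$, obtaining $\Hom_R(\mathcal L_{\xx},N)\qism\Hom_R(\mathcal L_{\xx},F_\bullet)\qism\Lambda^{\mathfrak a}(F_\bullet)$, and take homology. The difference is that you frame this through \ref{cor-4}, which forces $N$ to be finitely generated with a resolution by \emph{finitely generated} free modules, and then you try to recover arbitrary $N$ by a filtered-colimit argument. The paper avoids this detour entirely: it simply takes a flat resolution $F_\bullet\qism N$ of an \emph{arbitrary} $R$-module $N$ (such a resolution always exists) and applies \ref{cor-3} directly, since \ref{cor-3} requires only that $F_\bullet$ be a complex of flat modules, not that it be finitely generated.

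Your colimit step is where the real problem lies. First, $\mathcal L_{\xx}$ is a bounded complex of free modules but is \emph{not} claimed to consist of finitely generated free modules (it is built from telescope-type constructions), so there is no reason for $\Hom_R(\mathcal L_{\xx},-)$ to commute with filtered colimits. Second, and more seriously, the derived completion functors $\Lambda^{\mathfrak a}_i$ do not in general commute with filtered colimits: completion is an inverse limit, and already $\Lambda^{\mathfrak a}_0$ of a direct sum is typically not the direct sum of the completions. The citation you gesture at does not supply such a statement. So the passage from finitely generated $N$ to arbitrary $N$ is a genuine gap---but an unnecessary one, since your own middle paragraph, read with ``flat resolution'' in place of ``finitely generated free resolution'', is already the complete proof for all $N$.
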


\begin{proof} 
	Let $F_{\bullet} \qism N$ be a flat resolution of $N$. Then 
	\[
	\Hom_R(\mathcal{L}_{\xx},N) \qism 	\Hom_R(\mathcal{L}_{\xx}, F_{\bullet}) \qism \Lambda^{\mathfrak{a}}( F_{\bullet}).
	\]
	Since $\Lambda_i^{\mathfrak{a}}(N) = H_i(\Lambda^{\mathfrak{a}}( F_{\bullet}))$ the claim follows by taking 
	homology. 
\end{proof}

In fact the previous result holds for any complex $X$ of $R$-modules provided $\xx$ is weakly 
proregular (see \cite{SpSa} for these and more related information). Here we will continue 
with a further characterization of weakly proregular sequences. 

\begin{proposition} \label{prop-4}
	Let $\xx = x_1,\ldots,x_k$ denote a sequence of elelemts of $R$. Let $E$ 
	denote an injective cogenerator in the category of $R$-modules.  Then the following is equivalent.
	\begin{itemize}
		\item[(i)] The sequence $\xx$ is weakly proregular.
		\item[(ii)] $\check{H}^{\xx}_i(\Hom_R(I,J))= 0$ for all $i >0$ and any two injective $R$-module $I,J$. 
		\item[(iii)] $\check{H}^{\xx}_i(\Hom_R(I,E))= 0$ for all $i >0$ and any injective $R$-module $I$.
	\end{itemize}
\end{proposition}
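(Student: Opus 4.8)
The plan is to reduce all three statements to Proposition \ref{prop-3} applied with $M = R$, through a single natural identity: for every $R$-module $I$ and every injective $R$-module $J$,
\[
\check{H}_i^{\xx}(\Hom_R(I,J)) \cong \Hom_R(\check{H}^i_{\xx}(I), J) \qquad \text{for all } i .
\]
To establish it I would start from $\check{H}_i^{\xx}(\Hom_R(I,J)) = H_i(\Hom_R(\mathcal{L}_{\xx},\Hom_R(I,J)))$ and apply the tensor--Hom adjunction $\Hom_R(\mathcal{L}_{\xx},\Hom_R(I,J)) \cong \Hom_R(\mathcal{L}_{\xx}\otimes_R I, J)$, which is natural and compatible with differentials. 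Since $\mathcal{L}_{\xx} \qism \check{C}_{\xx}$ is a quasi-isomorphism of bounded complexes of flat $R$-modules (free, respectively flat; see \ref{not-2}), tensoring over $R$ with $I$ preserves it, so $\mathcal{L}_{\xx}\otimes_R I \qism \check{C}_{\xx}\otimes_R I$. As $J$ is injective, $\Hom_R(-,J)$ is exact, hence preserves quasi-isomorphisms and commutes with (co)homology; this yields $H_i(\Hom_R(\mathcal{L}_{\xx}\otimes_R I, J)) \cong \Hom_R(H^i(\check{C}_{\xx}\otimes_R I), J) = \Hom_R(\check{H}^i_{\xx}(I), J)$, the required identity (compare \cite[6.1.11]{SpSa}).

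Granting the identity, the equivalences are quick. For (i) $\Longrightarrow$ (ii): if $\xx$ is weakly proregular, i.e.\ $R$-weakly proregular in the sense of Definition \ref{def-2}, then Proposition \ref{prop-3} (equivalence of (i) and (iv)) with $M = R$ gives $\check{H}^i_{\xx}(I) = \check{H}^i_{\xx}(\Hom_R(R,I)) = 0$ for all $i>0$ and all injective $I$; substituting into the identity yields $\check{H}_i^{\xx}(\Hom_R(I,J)) \cong \Hom_R(0,J) = 0$ for all $i>0$ and all injective $I,J$, which is (ii). The implication (ii) $\Longrightarrow$ (iii) is the special case $J = E$. For (iii) $\Longrightarrow$ (i): the identity turns the hypothesis into $\Hom_R(\check{H}^i_{\xx}(I), E) = 0$ for all $i>0$ and all injective $I$; since $E$ is an injective cogenerator, $\Hom_R(X,E) = 0$ forces $X = 0$, hence $\check{H}^i_{\xx}(I) = 0$ for all $i>0$ and all injective $I$, and Proposition \ref{prop-3} with $M = R$ now says precisely that $\xx$ is weakly proregular.

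The only non-formal point is that the quasi-isomorphism $\mathcal{L}_{\xx} \qism \check{C}_{\xx}$ survives $-\otimes_R I$, and I expect this to be the main (and only mild) obstacle: it holds because the mapping cone of this quasi-isomorphism is a bounded acyclic complex of flat modules, hence $K$-flat, and therefore stays acyclic after tensoring with any $R$-module. The remaining ingredients — the adjunction, the exactness of $\Hom_R(-,J)$ for $J$ injective, and the fact that a cogenerator detects vanishing — are standard; the restriction to injective $I$ in (ii) and (iii) is present only because Proposition \ref{prop-3} controls $\check{H}^i_{\xx}$ merely on injective modules.
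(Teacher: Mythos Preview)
Your proof is correct and follows essentially the same route as the paper's: both use the tensor--Hom adjunction $\Hom_R(\mathcal{L}_{\xx},\Hom_R(I,J)) \cong \Hom_R(\mathcal{L}_{\xx}\otimes_R I, J)$, exactness of $\Hom_R(-,J)$ for injective $J$, the cogenerator property of $E$, and Proposition~\ref{prop-3} with $M=R$. You are more explicit than the paper in justifying $H^i(\mathcal{L}_{\xx}\otimes_R I)\cong \check{H}^i_{\xx}(I)$ via $K$-flatness of the mapping cone, whereas the paper simply asserts this identification; otherwise the arguments coincide.
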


\begin{proof}
		(i) $\Longrightarrow$ (ii): 
		By the adjointness there is the following isomorphisms of complexes 
		\[
		\Hom_R(\mathcal{L}_{\xx}, \Hom_R(I,J)) \cong \Hom_R(\mathcal{L}_{\xx} \otimes_RI,J)
. 		\]
		Therefore (i) implies that  $\Hom_R(H^i(\mathcal{L}_{\xx}  \otimes_R I),J) = 0$ for all $i > 0$. By the definition it yields (ii). \\
		(ii) $\Longrightarrow$ (iii): This holds trivially. \\
		(iii) $\Longrightarrow$ (i): By the previous adjointness isomorphism 
		it implies that $\Hom_R(H^i(\mathcal{L}_{\xx}  \otimes_R I),E) = 0$ for all 
		$i > 0$. Since 
		$E$ is an injective cogenerator this implies  $H^i(\mathcal{L}_{\xx}  \otimes_R I) \cong \check{H}^i_{\xx}(I)  = 0$ for all $i > 0$. Now this is equivalent to the fact that $\xx$ is weakly 
		proregular (see \ref{prop-3}).
\end{proof}

\begin{remark} (A)
	If $R$ is a coherent ring (see e.g. \cite[1.4.2]{SpSa}), then $\Hom_R(I,J)$ is a flat $R$-module for any two injective $R$-modules (see \cite[1.4.5]{SpSa}). \\
	(B) Let $R$ be a coherent ring. Then the conditions in \ref{prop-4} are 
	equivalent to 
	\begin{itemize}
		\item[(iv)] $\check{H}^{\xx}_i(F)= 0$  for all $i > 0$ and any flat $R$-module $F$.
	\end{itemize}
	This follows because $\Hom_R(I,E)$ is  $R$-flat for any injective $R$-module $I$ (see \cite[7.5.15]{SpSa} for more details). \\
	(C) There are examples of rings $R$ and injective $R$-modules $I,J$ 
	such that $\Hom_R(I,J)$ is not a flat $R$-module (see \cite[A.5.7]{SpSa}).
\end{remark}

\section{Local conditions}
Let $R$ denote a commutative ring. For an element $r \in R$ we write $D(f) = \Spec R \setminus V(f)$. Note that 
$D(f)$ is an open set in the Zariski topology of $\Spec R$. For $f 
\in R$ there is a natural map $\Spec R_f \to \Spec R$ that induces a homeomorphism between $\Spec R_f$ 
and $D(f)$. Since $\Spec R = \cup_{f \in R} D(f)$ and since $\Spec R$ is quasi-compact 
there are finitely many $f_1, \ldots, f_r \in R$ such that $\Spec R = \cup_{i=1}^r D(f_i)$. This provides the 
following definition.

\begin{definition} \label{def-3} (see \cite{St})
	A sequence $\ff = f_1,\ldots,f_r$ of elements of $R$ is called a covering sequence if $\Spec R = 
	\cup_{i=1}^r D(f_i)$. This is equivalent to saying that $R = \ff R$. Moreover, if $\ff$ is a covering sequence 
	then the natural map $M \to \oplus_{i=1}^r M_{f_i}$ is injective for any $R$-module $M$  as easily seen.
\end{definition}

Next we show proregularity as well as weakly proregularity are local properties. For a sequence $\xx = x_1,\ldots,x_k$ 
of elements of $R$ we denote by $\xx/1 = x/1, \ldots,x_k/1$ its image in a localization of $R$.

\begin{proposition} \label{prop-5}
	Let $\ff = f_1,\ldots,f_r$ denote a covering sequence of $R$. Let $\xx =x_1,\ldots,x_k$ be an ordered sequence of elements of $R$. For an $R$-module $M$  the following conditions are equivalent.
	\begin{itemize}
		\item[(i)] $\xx$  is $M$-weakly proregular (resp. proregular). 
		\item[(ii)] $\xx/1$ in $R_{f_i}$ is $M_{f_i}$-weakly proregular (resp. proregular) for all $i = 1,\ldots,r$. 
	\end{itemize}
\end{proposition}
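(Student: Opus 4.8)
The plan is to reduce everything to the homological characterizations already established, namely Proposition~\ref{prop-3}(iv) for weak proregularity and Theorem~\ref{thm-1}(iii) for proregularity, and then to check that these \v{C}ech-cohomological conditions localize correctly along the covering sequence $\ff$. The key observation is that for any element $y \in R$, any ideal $\mathfrak a = \xx_{i-1}R$, and any $R$-module $X$, the functors $\Gamma$, $\Hom$, and $\check{H}^j$ all interact well with the flat localization $R \to R_{f}$: one has $\check H^j_{\xx/1}(X_f) \cong (\check H^j_{\xx}(X))_f$ because $\check C_{\xx} \otimes_R R_f \cong \check C_{\xx/1}$ over $R_f$ and localization is exact, and similarly $\Gamma_{\xx_{i-1}R_f}(X_f)$ is compatible with $\Gamma_{\xx_{i-1}}$ after localizing, since $\xx_{i-1}R_f = (\xx_{i-1}R)_f$ has the same radical.

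First I would treat the implication (i)$\Rightarrow$(ii), which should be the easy direction. Assume $\xx$ is $M$-weakly proregular. By \ref{prop-3}, $\check H^j_{\xx}(\Hom_R(M,I)) = 0$ for all $j>0$ and all injective $R$-modules $I$. Fix $i$ and let $I'$ be an injective $R_{f_i}$-module; then $I'$ is also injective as an $R$-module (restriction of scalars along a localization preserves injectivity), and $\Hom_{R_{f_i}}(M_{f_i}, I') \cong \Hom_R(M, I')$ as $R_{f_i}$-modules. Since $\check C_{\xx/1}$ over $R_{f_i}$ is obtained from $\check C_{\xx}$ by base change, $\check H^j_{\xx/1}(\Hom_{R_{f_i}}(M_{f_i},I')) \cong \check H^j_{\xx}(\Hom_R(M,I')) \otimes_R R_{f_i}$, and this vanishes for $j>0$. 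By \ref{prop-3} again (now over $R_{f_i}$), $\xx/1$ is $M_{f_i}$-weakly proregular. The proregular case is identical, replacing $\check H^j_{\xx}$ with $\check H^1_{x_i}(\Gamma_{\xx_{i-1}}(-))$ and invoking Theorem~\ref{thm-1}, once one checks that $\Gamma_{\xx_{i-1}R_{f}}$ commutes with the localization $(-)_f$ applied to $\Hom_R(M,I)$ — this holds because $\Gamma_{\mathfrak b}(X)_f = \Gamma_{\mathfrak b_f}(X_f)$ for any ideal $\mathfrak b$ and module $X$, a standard fact that does not require Noetherianness.

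The harder direction is (ii)$\Rightarrow$(i), and this is where the covering hypothesis $R = \ff R$ is essential. Assume $\xx/1$ is $M_{f_j}$-(weakly) proregular for each $j$. Let $I$ be an injective $R$-module. I want to show $\check H^m_{\xx}(\Hom_R(M,I)) = 0$ for $m>0$ (resp.\ $\check H^1_{x_i}(\Gamma_{\xx_{i-1}}(\Hom_R(M,I))) = 0$). The idea is that an injective $R$-module $I$ need not localize to an injective $R_{f_j}$-module in general, so I cannot directly apply the hypothesis to $I_{f_j}$. Instead I would argue at the level of cohomology: set $H := \check H^m_{\xx}(\Hom_R(M,I))$ (resp.\ the relevant module). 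By Definition~\ref{def-3}, the covering property gives that $H \hookrightarrow \bigoplus_{j=1}^r H_{f_j}$ is injective, so it suffices to show each localization $H_{f_j}$ vanishes. By the base-change compatibility above, $H_{f_j} \cong \check H^m_{\xx/1}(\Hom_R(M,I)_{f_j}) \cong \check H^m_{\xx/1}(\Hom_R(M,I_{f_j}))$ as $R_{f_j}$-modules. Now $I_{f_j}$ is a flat-base-change of an injective module; it embeds into some injective $R_{f_j}$-module $I'$, but that alone is not enough. The clean fix is to instead start from a minimal/arbitrary injective $R_{f_j}$-module and use that every injective $R$-module $I$ satisfies: $\Hom_R(M,I)_{f_j}$ computed over $R_{f_j}$ still sees the vanishing, because localization of $\Hom_R(M,I)$ at $f_j$ equals $\Hom_{R_{f_j}}(M_{f_j}, I_{f_j})$ when $M$ need not be finitely presented...

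Actually, the slickest route is to go through \ref{prop-3}(iii) and its proregular analogue in Theorem~\ref{thm-1}, which are phrased via direct limits $\varinjlim H^j(\xx^{(n)}; \Hom_R(M,I))$ of Koszul cohomology: these commute with the localization $(-)_{f_j}$ (a flat base change), and $H^j(\xx^{(n)}; \Hom_R(M,I))_{f_j} \cong H^j((\xx/1)^{(n)}; \Hom_R(M,I)_{f_j})$. The point is then that $\Hom_R(M,I)_{f_j} \cong \Hom_{R_{f_j}}(M_{f_j}, I_{f_j})$ (this holds for an arbitrary module $M$ because $R \to R_{f_j}$ is a localization, so $\Hom$ commutes with it on the left argument after localizing the right argument), and $I_{f_j}$, while not injective over $R_{f_j}$, embeds in an injective $I'$ with $I'/I_{f_j}$ having vanishing higher Koszul cohomology colimit by the hypothesis applied to a larger class — so one runs a short exact sequence argument. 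I expect this embedding-and-long-exact-sequence step to be the main obstacle, and I would resolve it by noting that for any $R_{f_j}$-module $N$ there is an embedding $N \hookrightarrow E(N)$ into an injective hull, and the hypothesis "(ii)" should really be applied not just to $I'$ but via the observation that the class of modules $N$ with $\varinjlim H^{>0}((\xx/1)^{(n)};N)=0$ is closed under submodules in the relevant degree by the pro-zero characterization — alternatively, one invokes \ref{prop-3}(ii) over $R_{f_j}$, which is stated for all flat modules and hence passes to submodules of products of copies of $R_{f_j}$, and then dualizes. Assembling: each $H_{f_j}=0$, hence $H=0$ by the covering injectivity, giving (i).

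\begin{remark}
In writing up, the cleanest organization is: (1) state the base-change lemma $\check H^j_{\xx/1}(X_f) \cong \check H^j_{\xx}(X)_f$ and $\Gamma_{(\xx_{i-1}R)_f}(X_f) \cong \Gamma_{\xx_{i-1}R}(X)_f$, proved by exactness of localization applied to the \v{C}ech and torsion complexes; (2) do (i)$\Rightarrow$(ii) using restriction-of-scalars preservation of injectives; (3) do (ii)$\Rightarrow$(i) using Definition~\ref{def-3}'s injectivity of $M \to \bigoplus M_{f_i}$ together with \ref{prop-3} or \ref{thm-1}, applied to $\check H^j_{\xx}(\Hom_R(M,I))$ after localizing. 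Steps (1) and (2) are routine; the content is in recognizing that (3) reduces a global vanishing to a family of local ones precisely because $\ff$ covers $\Spec R$.
\end{remark}
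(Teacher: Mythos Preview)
Your detour through the injective-module characterizations in \ref{prop-3}(iv) and \ref{thm-1}(iii) creates a genuine gap in the direction (ii)$\Rightarrow$(i), one you notice but do not close. The isomorphism $\Hom_R(M,I)_{f_j} \cong \Hom_{R_{f_j}}(M_{f_j}, I_{f_j})$ that you invoke is false for non-finitely-presented $M$ (already for $M$ an infinite free module), and the localization $I_{f_j}$ of an injective $R$-module need not be injective over $R_{f_j}$ in the non-Noetherian setting that is the whole point of the paper. Your proposed repairs---embed $I_{f_j}$ into an injective hull and chase a long exact sequence, or appeal to closure properties of pro-zero systems---do not go through: the cokernel of $I_{f_j}\hookrightarrow I'$ is an arbitrary $R_{f_j}$-module about which hypothesis (ii) says nothing, and the vanishing of $\varinjlim H^{>0}$ is not inherited by submodules in general.

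The paper avoids all of this by working directly with the \emph{definitions} rather than with their injective-dual reformulations. For weak proregularity one uses, for $m\geq n$ and $i>0$, the commutative square
\[
\begin{array}{ccc}
H_i(\xx^{(m)};M) & \hookrightarrow & \bigoplus_{j=1}^r H_i((\xx/1)^{(m)}; M_{f_j}) \\
\downarrow & & \downarrow \\
H_i(\xx^{(n)};M) & \hookrightarrow & \bigoplus_{j=1}^r H_i((\xx/1)^{(n)}; M_{f_j}),
\end{array}
\]
whose horizontal maps are injective by Definition~\ref{def-3} and the fact that Koszul homology commutes with flat base change. For (i)$\Rightarrow$(ii) the left vertical map vanishes for suitable $m$, hence so does each summand on the right after localizing. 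For (ii)$\Rightarrow$(i) one chooses, for each $j$, an $m_j$ killing the $j$-th summand, sets $m=\max_j m_j$ (here finiteness of the covering is used), and concludes from injectivity of the bottom row that the left vertical map is zero. The proregular case is identical with the modules $(x_1^m,\ldots,x_{i-1}^m)M:_M x_i^m/(x_1^m,\ldots,x_{i-1}^m)M$ in place of Koszul homology. The moral: Koszul homology of $M$ and colon-quotients of $M$ localize on the nose, whereas $\Hom_R(M,I)$ does not; so for a local--global statement one should stay on the $M$-side.
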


\begin{proof}
	At first we consider the case of weakly proregular sequences. 
	Then for all integers $m\geq n$ and any integer $i > 0$ there is a commutative diagram 
	\[
	\begin{array}{ccc}
	H_i(\xx^{(m)};M) & \to & \oplus_{j=1}^r H_i(\xx/1^{(m)}; M_{f_j}) \\
	\downarrow& & \downarrow \\
	H_i(\xx^{(n)};M) & \to & \oplus_{j=1}^r H_i(\xx/1^{(n)}; M_{f_j})
	\end{array}
	\]
	where the horizontal maps are injective since $\ff$ is a covering sequence. If (i) holds the vertical maps at the left are zero for a given $n$ and an appropriate $m \geq n$. Since localization is flat and commutes 
	with homology the vertical maps at the right are zero too. For the converse fix $n$ and $i$ and choose 
	$m \geq n$ such that $H^i(\xx/1^{(m)}; M_{f_j}) \to H^i(\xx/1^{(n)}; M_{f_j})$ is zero for all $j = 1,\ldots,r$. 
	Then the vertical map at the right is zero.  Since the horizontal 
	maps are injective it follows that the vertical map at the left is zero. 
	
	The proof for the case of proregular sequence follows similar arguments by an inspection of the 
	natural map 
	\[
	(x_1^m,\ldots,x_{i-1}^m)M :_M x_i^m/(x_1^m,\ldots,x_{i-1}^m)M  \stackrel{x_i^{m-n}}{\longrightarrow} 
	(x_1^n,\ldots,x_{i-1}^n)M :_M x_i^n/(x_1^n,\ldots,x_{i-1}^n)M
	\]
	and the direct sum of the localizations with respect to $R_{f_i}, i = 1, \ldots, r$. 
\end{proof}

A corresponding local global principle is the following. 

\begin{proposition} \label{prop-6}
	 Let $\xx =x_1,\ldots,x_k$ be an ordered sequence of elements of $R$. For an $R$-module $M$  the following conditions are equivalent.
	\begin{itemize}
		\item[(i)] $\xx$  is $M$-weakly proregular (resp. proregular). 
		\item[(ii)] $\xx/1$ in $R_{\mathfrak{p}}$ is $M_{\mathfrak{p}}$-weakly proregular (resp. proregular) 
		for all $\mathfrak{p} \in \Spec R$. 
		\item[(iii)] $\xx/1$ in $R_{\mathfrak{m}}$ is $M_{\mathfrak{m}}$-weakly proregular (resp. proregular) 
		for all  maximal ideals $\mathfrak{m} \in \Spec R$. 
	\end{itemize}
\end{proposition}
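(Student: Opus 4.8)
The plan is to prove the cycle $(i)\Rightarrow(ii)\Rightarrow(iii)\Rightarrow(i)$; only the last implication is substantial, and it will be deduced from Proposition \ref{prop-5}.

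For $(i)\Rightarrow(ii)$ I would invoke the exactness and flatness of localization. Every module and homomorphism occurring in Definitions \ref{def-1} and \ref{def-2} is built from finite limits and colimits of $R$-modules together with the functor $(-)\otimes_R K_\bullet(\xx^{(m)};R)$ for the bounded complex of finite free modules $K_\bullet(\xx^{(m)};R)$; hence all of them commute with $(-)\otimes_R R_{\mathfrak p}$. In particular the colon quotient $(x_1^m,\ldots,x_{i-1}^m)M:_Mx_i^m/(x_1^m,\ldots,x_{i-1}^m)M$ and the Koszul homology $H_i(\xx^{(m)};M)$ localize to the corresponding objects over $R_{\mathfrak p}$ for $M_{\mathfrak p}$, the multiplication map of Definition \ref{def-1} (resp.\ the transition map $H_i(\xx^{(m)};M)\to H_i(\xx^{(n)};M)$) localizes to its analogue, and a zero map stays zero. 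Thus $(i)$ passes to each $(R_{\mathfrak p},M_{\mathfrak p},\xx/1)$, which is $(ii)$; and $(ii)\Rightarrow(iii)$ is trivial, since maximal ideals are prime.

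For $(iii)\Rightarrow(i)$ the strategy is a spreading-out followed by Proposition \ref{prop-5}. The decisive step is: if $\xx/1$ in $R_{\mathfrak m}$ is $M_{\mathfrak m}$-weakly proregular (resp.\ proregular), then there is $f\in R\setminus\mathfrak m$ with $\xx/1$ in $R_f$ being $M_f$-weakly proregular (resp.\ proregular). Granting this, choose for each maximal ideal $\mathfrak m$ such an $f_{\mathfrak m}$; then $\bigcup_{\mathfrak m}D(f_{\mathfrak m})$ is open and contains every closed point of $\Spec R$, and since a nonempty closed subset $V(\mathfrak a)$ always contains a maximal ideal, $\bigcup_{\mathfrak m}D(f_{\mathfrak m})=\Spec R$. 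Quasi-compactness reduces this to a finite subcover $\Spec R=\bigcup_{j=1}^rD(f_{\mathfrak m_j})$, so $\ff=f_{\mathfrak m_1},\ldots,f_{\mathfrak m_r}$ is a covering sequence in the sense of Definition \ref{def-3} over each $R_{f_{\mathfrak m_j}}$ of which $\xx/1$ is $M_{f_{\mathfrak m_j}}$-weakly proregular (resp.\ proregular); Proposition \ref{prop-5} then yields $(i)$.

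The main obstacle is precisely the spreading-out step: the modules in question — the colon quotients, resp.\ the Koszul homologies $H_i(\xx^{(n)};M)$ — are not finitely generated in general, so the vanishing of a transition map after localizing at $\mathfrak m$ need not descend to its vanishing after inverting a single $f\notin\mathfrak m$, and one would moreover have to make the infinitely many witnesses for the pairs $(i,n)$ uniform in one $f$. I would circumvent this by replacing the raw pro-zero conditions with the homological criteria \ref{thm-1}(iii) and \ref{prop-3}(iv): $\xx$ is $M$-proregular (resp.\ weakly proregular) if and only if the $R$-modules $\check H^1_{x_i}(\Gamma_{\xx_{i-1}}(\Hom_R(M,I)))$ for $i=1,\ldots,k$ (resp.\ $\check H^i_{\xx}(\Hom_R(M,I))$ for $i>0$) vanish for every injective $R$-module $I$. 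Since $\Gamma_{\xx_{i-1}}$ (a finitely generated ideal) and \v{C}ech cohomology commute with localization, the stalk of such a module at $\mathfrak m$ is the \v{C}ech cohomology computed over $R_{\mathfrak m}$ out of $\Hom_R(M,I)_{\mathfrak m}$, and one must then see, using hypothesis $(iii)$ over $R_{\mathfrak m}$ together with the comparison of $\Hom_R(M,I)_{\mathfrak m}$ with $\Hom_{R_{\mathfrak m}}(M_{\mathfrak m},I_{\mathfrak m})$ (reducing, via \ref{prop-4}, to testing a single injective cogenerator), that each such stalk vanishes; then the $R$-module itself vanishes, a module being zero as soon as all its localizations at maximal ideals are, and \ref{thm-1}/\ref{prop-3} deliver $(i)$ — this route trades the delicate uniformity in $f$ for the harmless locality of the condition ``zero''. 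Controlling $\Hom_R(M,I)_{\mathfrak m}$ in the non-Noetherian setting is the point I expect to require the most care.
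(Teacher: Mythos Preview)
Your proposal is considerably more detailed than the paper's own argument, which consists of the single sentence ``The proof follows easily by \cite[Theorem 4.6, p.~27]{mats}. We omit the details here.'' You have correctly located the only substantive point, the implication $(iii)\Rightarrow(i)$, and correctly flagged the uniformity problem: hypothesis $(iii)$ provides, for each maximal ideal $\mathfrak m$ and each pair $(i,n)$, a witness $m=m(\mathfrak m)$ making $H_i(\xx^{(m)};M)_{\mathfrak m}\to H_i(\xx^{(n)};M)_{\mathfrak m}$ zero, whereas $(i)$ demands a single $m$ working for all $\mathfrak m$ simultaneously. Matsumura's Theorem~4.6 (a homomorphism of $R$-modules is zero iff all its localizations at maximal ideals are) is precisely what one would invoke \emph{after} such a uniform $m$ has been produced; by itself it does not manufacture one.

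Neither of your two proposed workarounds closes this gap. For the spreading-out route, $(\phi_m)_{\mathfrak m}=0$ only says that each element of $\im\phi_m$ is killed by \emph{some} $s\notin\mathfrak m$; since $H_i(\xx^{(m)};M)$ is in general not finitely generated, there is no passage to a single $f\notin\mathfrak m$ with $(\phi_m)_f=0$, let alone one $f$ handling all pairs $(i,n)$, so the reduction to Proposition~\ref{prop-5} fails for exactly the reason you anticipate. For the homological route, the obstruction you name is fatal rather than merely delicate: $\Hom_R(M,I)_{\mathfrak m}$ bears no usable relation to $\Hom_{R_{\mathfrak m}}(M_{\mathfrak m},J)$ for an $R_{\mathfrak m}$-injective $J$ when $M$ is not finitely presented, and Proposition~\ref{prop-4} (which treats only $M=R$ and the \emph{homology} $\check H_i^{\xx}$, not the cohomology appearing in \ref{thm-1} and \ref{prop-3}) does not bridge this. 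In sum, you have analyzed the difficulty more carefully than the paper does, but neither the paper's bare citation nor your two routes supplies an argument for $(iii)\Rightarrow(i)$; the implication, in the stated non-Noetherian generality, requires an idea that is absent from both.
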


\begin{proof}
	The proof follows easily by \cite[Theorem 4.6, p. 27]{mats}. We omit the details here. 
\end{proof}

By view of  \cite{St} we recall the following definition and extend it with the notion of proregularity. 

\begin{definition} \label{def-4} 
	(A)  (see \cite{St}) Let $R$ denote a commutative ring. An ideal $\mathcal{I} \subset R$ is 
	called an effective Cartier divisor 
	if there is a covering sequence $\ff = f_1,\ldots,f_r$ such that $\mathcal{I} R_{f_i}  = x_i R_{f_i}, i = 1,\ldots,r,$ 
	for non-zerodivisors $x_i$ of $R$. It follows that $\mathcal{I} \subseteq (x_1,\ldots,x_r)R$. \\
	(B) Let $\mathcal{I}$ denote a Cartier divisor and $x \in R $. The ideal $(\mathcal{I}, x)$ is 
	called proregular if 
	for any integer $n$ there is an integer $m\geq n$ such that $\mathcal{I}^m: x^m  \subseteq 
	\mathcal{I}^n : x^{m-n}$ . This is in consistence with the definition in \cite{GM} (see \ref{def-1}) and is equivalent 
	to the fact that for each $n$ there is an integer $m \geq n$ such that the multiplication map 
	$
	\mathcal{I}^m :_R x^m /\mathcal{I}^m \stackrel{x^{m-n}}{\longrightarrow} 
	\mathcal{I}^n :_R x^n /\mathcal{I}^n
	$
	is the zero map.
\end{definition}

In the following we shall consider a local global principle for proregular effective Cartier divisors. 

\begin{theorem} \label{prop-7}
	Let $\mathcal{I} \subseteq R$ an effective Cartier divisor with the covering sequence $\ff = f_1,\ldots, f_r$ 
	such that $\mathcal{I} R_{f_i}  = x_i R_{f_i}, i = 1,\ldots,r,$ 
	for non-zerodivisors $x_i$ of $R$. Suppose that $R/\mathcal{I} $ is of bounded $x$-torsion for some 
	$x \in R$. 
	\begin{itemize}
		\item[(a)]  $(\mathcal{I},x),$ is proregular, i.e. for each integer $n$ there is an  $m \geq n$ such that the multiplication map $$\mathcal{I}^m :_R x^m /\mathcal{I}^m \stackrel{x^{m-n}}{\longrightarrow} 
		\mathcal{I}^n :_R x^n /\mathcal{I}^n$$ is the zero map.
		\item[(b)]  $\Gamma_{\mathcal{I}} (I)/\Gamma_{(\mathcal{I},x)}(I)$ is $x$-divisible for any 
		injective $R$-module $I$.
	\end{itemize}
Moreover, the two conditions (a) and (b) are equivalent. 
\end{theorem}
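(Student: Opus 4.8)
The plan is to split the statement into two parts: the equivalence of (a) and (b), which holds for an \emph{arbitrary} ideal $\mathcal{I}$ and is an ideal-coefficient variant of Theorem \ref{thm-1}; and the assertion that, under the stated hypotheses, $\check{H}^1_x(\Gamma_{\mathcal{I}}(I))=0$ for every injective $R$-module $I$ --- which by the first part is precisely condition (b), and hence also yields (a).

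For the equivalence I would start from the elementary inclusions $(\mathcal{I}+xR)^{2n}\subseteq\mathcal{I}^n+x^nR\subseteq(\mathcal{I}+xR)^n$; these give $\Gamma_{(\mathcal{I},x)}(I)=\Gamma_x(\Gamma_{\mathcal{I}}(I))$ for every $R$-module $I$, so feeding $X=\Gamma_{\mathcal{I}}(I)$ into the exact sequence $0\to\Gamma_x(X)\to X\to X_x\to\check{H}^1_x(X)\to0$ from the proof of Theorem \ref{thm-1}, condition (b) turns into the vanishing $\check{H}^1_x(\Gamma_{\mathcal{I}}(I))=0$ for all injective $I$. It then remains to see that $(\mathcal{I},x)$ is proregular if and only if that vanishing holds; this is exactly the argument of Theorem \ref{thm-1} carried out for the single slot $x$ with $M=R$ and the quotient $M/\xx^{(n)}M$ replaced throughout by $R/\mathcal{I}^n$: one uses $\mathcal{I}^n:_Rx^n/\mathcal{I}^n\cong\Hom_R(R/x^nR,R/\mathcal{I}^n)$, the isomorphism $\Hom_R(\Hom_R(R/x^nR,R/\mathcal{I}^n),I)\cong H^1(x^n;0:_I\mathcal{I}^n)$ for injective $I$, and the identification $\varinjlim_n H^1(x^n;0:_I\mathcal{I}^n)\cong\check{H}^1_x(\Gamma_{\mathcal{I}}(I))$, which is legitimate because $\check{H}^1_x$ commutes with filtered colimits and $\Gamma_{\mathcal{I}}(I)=\varinjlim_n(0:_I\mathcal{I}^n)$.

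For the vanishing, the key point is that $R/\mathcal{I}^n$ is of bounded $x$-torsion for every $n\ge1$. I would prove this by induction on $n$, the case $n=1$ being the hypothesis. The inductive step reduces to the claim that $\mathcal{I}^n/\mathcal{I}^{n+1}$ is of bounded $x$-torsion, and this is where the Cartier structure is used: after localizing at $f_i$ the ideal $\mathcal{I}$ becomes $x_iR_{f_i}$ with $x_i$ a non-zerodivisor, so multiplication by $x_i^n$ gives $(\mathcal{I}^n/\mathcal{I}^{n+1})_{f_i}\cong R_{f_i}/x_iR_{f_i}=(R/\mathcal{I})_{f_i}$, which is of bounded $(x/1)$-torsion; since $\ff$ is a covering sequence, Proposition \ref{prop-5} (the case $k=1$) then gives that $\mathcal{I}^n/\mathcal{I}^{n+1}$ is of bounded $x$-torsion. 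Applying $\Gamma_x$ to the exact sequence $0\to\mathcal{I}^n/\mathcal{I}^{n+1}\to R/\mathcal{I}^{n+1}\to R/\mathcal{I}^n\to0$ and using that an extension of modules of bounded $x$-torsion is again of bounded $x$-torsion (as in the proof of Proposition \ref{prop-2}), it follows that $R/\mathcal{I}^{n+1}$ is of bounded $x$-torsion. Once this is established, Corollary \ref{cor-1} yields $\check{H}^1_x(0:_I\mathcal{I}^n)=\check{H}^1_x(\Hom_R(R/\mathcal{I}^n,I))=0$ for all $n$ and all injective $I$, and passing to the colimit over $n$ gives $\check{H}^1_x(\Gamma_{\mathcal{I}}(I))=0$, as wanted.

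I expect the main obstacle to be exactly the claim that $\mathcal{I}^n/\mathcal{I}^{n+1}$ --- and hence $R/\mathcal{I}^n$ --- is of bounded $x$-torsion: this is the step at which the hypotheses on $\mathcal{I}$ genuinely enter, and it has no counterpart in the earlier sections; the remaining steps are routine given Theorem \ref{thm-1}, Corollary \ref{cor-1}, and Propositions \ref{prop-2} and \ref{prop-5}. One can also obtain (a) directly in the spirit of Proposition \ref{prop-5}: for each $i$ the sequence $x_i/1,x/1$ in $R_{f_i}$ is $R_{f_i}$-proregular by Proposition \ref{prop-2} (as $x_i/1$ is a non-zerodivisor of $R_{f_i}$, hence $R_{f_i}$-regular, and $R_{f_i}/x_iR_{f_i}=(R/\mathcal{I})_{f_i}$ is of bounded $(x/1)$-torsion); one then descends to $R$ by fixing $n$, taking $m$ to be the maximum of the finitely many local bounds --- note that once a transition map of the pro-zero inverse system $\{\mathcal{I}^m:_Rx^m/\mathcal{I}^m\}_m$ vanishes, so do all longer ones, since these maps compose --- and using that $\mathcal{I}^n:_Rx^n/\mathcal{I}^n$ embeds into $\bigoplus_i(\mathcal{I}^n:_Rx^n/\mathcal{I}^n)_{f_i}$ because $\ff$ is a covering sequence (see \ref{def-3}).
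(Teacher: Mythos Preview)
Your proof is correct, and in fact your closing paragraph is precisely the paper's argument for (a): localize so that $\mathcal{I}R_{f_i}=x_iR_{f_i}$, observe that $x_i/1,x/1$ is $R_{f_i}$-proregular by Proposition~\ref{prop-2}, choose a common $m$ for the finitely many $i$, and descend through the injection $\mathcal{I}^n:_R x^n/\mathcal{I}^n\hookrightarrow\bigoplus_i(\mathcal{I}^n:_R x^n/\mathcal{I}^n)_{f_i}$. The equivalence (a)$\Leftrightarrow$(b) is handled identically in both your write-up and the paper, via the ideal-coefficient version of Theorem~\ref{thm-1}.

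Your \emph{primary} route, however, is genuinely different. Instead of descending the proregularity condition itself, you prove that each $R/\mathcal{I}^n$ has bounded $x$-torsion by inducting on $n$ through $0\to\mathcal{I}^n/\mathcal{I}^{n+1}\to R/\mathcal{I}^{n+1}\to R/\mathcal{I}^n\to0$, using the Cartier structure only to identify $(\mathcal{I}^n/\mathcal{I}^{n+1})_{f_i}\cong (R/\mathcal{I})_{f_i}$ and then invoking the $k=1$ case of Proposition~\ref{prop-5}. You then feed this into Corollary~\ref{cor-1} and a filtered colimit to get $\check{H}^1_x(\Gamma_{\mathcal{I}}(I))=0$. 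This is a pleasant variation: it isolates the role of the Cartier hypothesis in a single module-theoretic statement (bounded torsion of the associated graded pieces) rather than in a diagram chase, and it makes transparent that condition (b) is really a colimit of the single-element case. The paper's direct diagram argument is shorter and avoids the induction, but yours gives the extra information that $R/\mathcal{I}^n$ is of bounded $x$-torsion for every $n$, which is not stated in the paper and may be of independent use.
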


\begin{proof}
	At first we prove (a). Since $R/\mathcal{I}$ is of bounded $x$-torsion there is an integer $c$ such that 
	$\mathcal{I} :_R x^m = \mathcal{I}:_R x^c$  for all $m \geq c$. By localization at $R_{f_i}$ it follows that 
	$x_i R_{f_i} :_{R_{f_i} }  x/1^m = x_i R_{f_i} :_{R_{f_i} }  x/1^c$ for all $m \geq c$ and $i = 1,\ldots,r$ with $R_{f_i}$-regular elements $x_i/1 \in R_{f_i}$.  By view of  \ref{prop-1} it follows $x_i/1, x/1$ is 
	an $R_{f_i}$-proregular sequence for all $i = 1,\ldots,r$. For a given $n$ and $m \geq n$ there is the 
	following commutative diagram 
		\[
	\begin{array}{ccc}
	\mathcal{I}^m :_R x^m/\mathcal{I}^m& \to & \oplus_{j=1}^r  (x_i ^mR_{f_i} :_{R_{f_i} }  x/1^m)/x_i^mR_{f_i}\\
	\downarrow^{x^{m-n}}& & \downarrow ^{\oplus (x^{m-n}/1)}\\
	\mathcal{I}^n :_R x^n/\mathcal{I}^n& \to & \oplus_{j=1}^r  (x_i ^nR_{f_i} :_{R_{f_i} }  x/1^n)/x_i^nR_{f_i}
	\end{array}
	\]
	Now choose $m$ such that multiplication at the vertical maps at the right are all zero. Then the multiplication 
	at the left is the zero map too since the horizontal maps are injective as follows by the localization (see \ref{def-3}). 
	
	For the proof of (b) recall that the natural map $\mathcal{I}^m :_R x^m /\mathcal{I}^m \stackrel{x^{m-n}}{\longrightarrow} 
	\mathcal{I}^n :_R x^n /\mathcal{I}^n$ coincides with the map induced by the Koszul complexes, i.e.,
	\[
	H_1(x^m; R/\mathcal{I}^m) \stackrel{x^{m-n}}{\longrightarrow}  	H_1(x^n; R/\mathcal{I}^n) .
	\]
	Now apply $\Hom_R(\cdot,I)$ with an arbitrary injective $R$-module. Then the induced map 
	\[
	H^1(x^n; \Hom_R(R/\mathcal{I}^n, I))  \to H^1(x^m; \Hom_R(R/\mathcal{I}^m, I)) 
	\]
	is zero and $0 = \varinjlim H^1(x^m; \Hom_R(R/\mathcal{I}^m, I))  \cong \check{H}^1_x(\Gamma_{\mathcal{I}}(I))$ 
	which proves the statement  as in the proof of \ref{thm-1}. For the implication (b) $\Longrightarrow$ (a) we fix $n$ and choose an injection $$H^1(x^n; \Hom_R(R/\mathcal{I}^n, I)) \cong 	\mathcal{I}^n :_R x^n /\mathcal{I}^n 
	\hookrightarrow I$$ into an injective $R$-module $I$. By the vanishing of the direct limit 
	there is,  as in the proof of Theorem \ref{thm-1}, an integer $m \geq n$ such that (a) holds. 
\end{proof}

In the following we shall give a comment of the previous investigations to the recent work 
of Bhatt and Scholze (see \cite{BSc}). To this end let $p \in \mathbb{N}$ denote a prime number 
and let $\mathbb{Z}_p := \mathbb{Z}_{\mathfrak{p}}$ the localization at  the prime ideal 
$(p)= \mathfrak{p}\in \Spec \mathbb{Z}$. In the following let $R$ be a $\mathbb{Z}_p$-algebra. 

\begin{definition} \label{def-5} (see \cite[Definition 1.1]{BSc})
	A prism is a pair $(R,\mathcal{I}$) consisting of a $\delta$-ring $R$ (see \cite[Remark 1.2]{BSc}) and 
	a Cartier divisor $\mathcal{I}$ on $R$ satisfying the following two conditions.
	\begin{itemize}
		\item[(a)] The ring $R$ is $(p,\mathcal{I})$-adic complete.
		\item[(b)] $p \in \mathcal{I} + \phi_R(\mathcal{I})R$, where $\phi_R$ is the lift of the Frobenius on $R$ 
		induced by its $\delta$-structure (see \cite[Remark 1.2]{BSc}).
	\end{itemize}
\end{definition}

With the previous definition there is the following application of our results.

\begin{corollary} \label{cor-6}
	Let $(R, \mathcal{I})$ denote a prism. Suppose that $\mathcal{I}$ is of bounded $p$-torsion. Then 
	\begin{itemize}
		\item[(a)] 	$(\mathcal{I}, p)$ is proregular in the sense of \ref{def-4}. 
		\item[(b)]  $\Gamma_{\mathcal{I}} (I)/\Gamma_{(\mathcal{I},p)}(I)$ is $p$-divisible for any 
		injective $R$-module $I$, i.e. $\Gamma_{\mathcal{I}} (I) = p \Gamma_{\mathcal{I}} (I) + 
		\Gamma_{(\mathcal{I},p)}(I)$.
	\end{itemize}
The conditions in (a) and (b) are equivalent.
\end{corollary}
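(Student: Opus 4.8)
The plan is to deduce Corollary \ref{cor-6} directly from Theorem \ref{prop-7} by checking that a prism $(R,\mathcal I)$ satisfying the hypotheses provides exactly the data that theorem requires. First I would observe that the Cartier divisor $\mathcal I$ of the prism is, by Definition \ref{def-5} and \ref{def-4}(A), an effective Cartier divisor: there is a covering sequence $\ff = f_1,\ldots,f_r$ with $\mathcal I R_{f_i} = x_i R_{f_i}$ for non-zerodivisors $x_i \in R$. (Here one should note the standard fact from \cite{BSc} that the Cartier divisors occurring in prisms are invertible, hence effective in this sense; this is the one external input.) Then the element to plug in for ``$x$'' in Theorem \ref{prop-7} is the prime $p$, and the standing hypothesis of Corollary \ref{cor-6} is precisely that $R/\mathcal I$ is of bounded $p$-torsion. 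Thus all the hypotheses of \ref{prop-7} hold with $x = p$.

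Having matched the hypotheses, statements (a) and (b) of Corollary \ref{cor-6} are literally statements (a) and (b) of Theorem \ref{prop-7} applied to $\mathcal I$ and $x = p$: part (a) gives that $(\mathcal I, p)$ is proregular in the sense of Definition \ref{def-4}(B), and part (b) gives that $\Gamma_{\mathcal I}(I)/\Gamma_{(\mathcal I,p)}(I)$ is $p$-divisible for every injective $R$-module $I$; the equivalence of (a) and (b) is the final clause of \ref{prop-7}. The only item needing a word of explanation is the reformulation ``$p$-divisible'' $\iff$ $\Gamma_{\mathcal I}(I) = p\,\Gamma_{\mathcal I}(I) + \Gamma_{(\mathcal I,p)}(I)$, which is just unwinding what it means for the quotient module $\Gamma_{\mathcal I}(I)/\Gamma_{(\mathcal I,p)}(I)$ to satisfy $x \cdot (-) $ surjective: every class is $p$ times another class, i.e. every element of $\Gamma_{\mathcal I}(I)$ lies in $p\,\Gamma_{\mathcal I}(I)$ modulo $\Gamma_{(\mathcal I,p)}(I)$.

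The main obstacle, such as it is, is not a deep one: it is simply to justify that $\mathcal I$ being a ``Cartier divisor on a $\delta$-ring'' in the sense of \cite{BSc} really does coincide with being an ``effective Cartier divisor'' in the sense of Definition \ref{def-4}(A), i.e. that $\mathcal I$ is locally principal on a Zariski cover of $\Spec R$ generated by finitely many elements and that the local generators can be taken to be non-zerodivisors. This is where one invokes that $\Spec R$ is quasi-compact (so finitely many basic opens suffice) together with the fact that an invertible ideal is locally free of rank one, hence locally generated by a non-zerodivisor. Once that identification is in place the corollary is a direct citation of Theorem \ref{prop-7}. I would therefore write the proof as a short paragraph: note $\mathcal I$ is an effective Cartier divisor, note $R/\mathcal I$ is of bounded $p$-torsion by hypothesis, apply \ref{prop-7} with $x = p$ to obtain (a), (b) and their equivalence, and finally unwind the definition of $p$-divisibility of the quotient to get the displayed identity in (b).

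\begin{proof}
	By Definition \ref{def-5} the ideal $\mathcal I$ is a Cartier divisor on the $\delta$-ring $R$; since $\Spec R$ is quasi-compact and an invertible ideal is locally free of rank one, there is a covering sequence $\ff = f_1,\ldots,f_r$ of $R$ with $\mathcal I R_{f_i} = x_i R_{f_i}$ for non-zerodivisors $x_i \in R$, so $\mathcal I$ is an effective Cartier divisor in the sense of Definition \ref{def-4}(A). By hypothesis $R/\mathcal I$ is of bounded $p$-torsion. Hence Theorem \ref{prop-7}, applied with $x = p$, yields that $(\mathcal I, p)$ is proregular, which is assertion (a), and that $\Gamma_{\mathcal I}(I)/\Gamma_{(\mathcal I,p)}(I)$ is $p$-divisible for every injective $R$-module $I$, which is assertion (b); the same theorem gives the equivalence of (a) and (b). Finally, $p$-divisibility of $\Gamma_{\mathcal I}(I)/\Gamma_{(\mathcal I,p)}(I)$ means that multiplication by $p$ is surjective on this quotient, i.e. every element of $\Gamma_{\mathcal I}(I)$ lies in $p\,\Gamma_{\mathcal I}(I)$ modulo $\Gamma_{(\mathcal I,p)}(I)$; equivalently $\Gamma_{\mathcal I}(I) = p\,\Gamma_{\mathcal I}(I) + \Gamma_{(\mathcal I,p)}(I)$, as stated in (b).
\end{proof}
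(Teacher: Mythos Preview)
Your proposal is correct and follows exactly the paper's approach: the paper's proof is the single line ``This is an immediate consequence of \ref{prop-7},'' and you have simply spelled out why the hypotheses of \ref{prop-7} are met (identifying the prism's Cartier divisor with an effective Cartier divisor in the sense of \ref{def-4}(A) and taking $x=p$) and unpacked the $p$-divisibility reformulation in (b). There is nothing to add.
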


\begin{proof}
	This is an immediate consequence of \ref{prop-7}.
\end{proof}

We note that Yekutieli (see  \cite[Theorem 7.3]{Y}) has slightly modified the notion of weakly proregularity and has shown that $(\mathcal{I}, p)$ is weakly proregular  under the assumption of \ref{cor-6}. It should be mentioned that proregularity is more strong than 
weakly proregularity as shown by the example in \cite{lip2}. 

While the notion of weakly 
proregularity plays an essential role in the study of local (co-) homology (see \cite{SpSa} and the references 
there) the previous statements seem to be a further application of the notion of proregularity as introduced 
by Greenlees and May (see  \cite{GM}) and by Lipman (see \cite{lip}). 

\medskip 
{\sc Acknowledgement}. The author thanks Anne-Marie Simon (Universit\'{e} Libre de Bruxelles) for various discussions about the subject 
and a careful reading of the manuscript.

\end{document}